\newtheorem{thm}{Theorem}[section]
\newtheorem{theorem}[thm]{Theorem}
\newtheorem{corollary}[thm]{Corollary}
\newtheorem{lemma}[thm]{Lemma}
\newtheorem{proposition}[thm]{Proposition}
\newtheorem{remark}[thm]{Remark}
\theoremstyle{definition}
\theoremstyle{remark}
 \def\bCx{\mathbb C\langle x_1,\dots,x_g\rangle}
\def\e{ e^{i\theta} }
\def\cR{\mathcal R}
 \def\cD{\mathcal D}
 \def\cN{\mathcal N}
 \def\matng{M_n(\mathbb C)^g}
 \def\matnh{M_n(\mathbb C)^{\h}}
 \def\cL{\mathcal L}
 \def\h{\tilde{g}}
 \def\math{M(\mathbb C)^{\h}}
 \def\matg{M(\mathbb C)^g}
 \def\cU{\mathcal U}
 \def\cH{\mathcal H}
 \def\cV{\mathcal V}
\def\RR{\mathbb R}
\def\R{\mathbb R}
\def\N{\mathbb N}
\def\C{\mathbb C}
\newcommand{\eps}{\epsilon}
\def\ben{\begin{enumerate} }
\def\een{\end{enumerate} }
\def\beq{\begin{equation} }
\def\eeq{\end{equation} }
\def\tg{\tilde g}
\def\fm{f^{(m)}}
\def\nca{free map }
\def\ncas{free  maps }
\def\ncasp{free  maps. }
\def\NCA{Free Map }
\def\ncap{free  map. }
\def\ncacomma{free  map, }
\def\cncap{analytic free  map. }
\def\cncasp{analytic free  maps. }
\def\cncacomman{analytic free  map, }
\def\cnca{analytic \nca}
\def\cncsma{analytic free self-map }
\def\cncas{analytic \ncas}
\def\fba{bianalytic free map }
\def\fbap{bianalytic free map. }
\def\fbasp{bianalytic free maps. }
\def\fn{f[n]}
\def\fjn{f_j[n]}
\def\fm{f[m]}
\def\fone{f[1]}
\renewcommand{\setminus}{\smallsetminus}
\numberwithin{equation}{section}
\begin{document}

\setcounter{page}{1}
\title{Proper Analytic Free  Maps}

\author[Helton]{J. William Helton${}^1$}
\address{Department of Mathematics\\
  University of California \\
  San Diego}
\email{helton@math.ucsd.edu}
\thanks{${}^1$Research supported by NSF grants
DMS-0700758, DMS-0757212, and the Ford Motor Co.}

\author[Klep]{Igor Klep${}^2$}
\address{Univerza v Ljubljani, Fakulteta za matematiko in fiziko \\
and
Univerza v Mariboru, Fakulteta za naravoslovje in matematiko 
}
\email{igor.klep@fmf.uni-lj.si}
\thanks{${}^2$Research supported by the Slovenian Research Agency grants
P1-0222 and P1-0288.}

\author[McCullough]{Scott McCullough${}^3$}
\address{Department of Mathematics\\
  University of Florida, Gainesville 
   }
   \email{sam@math.ufl.edu}
\thanks{${}^3$Research supported by the NSF grant DMS-0758306.}

\subjclass[2010]{46L52, 47A56, 46G20 (Primary).
47A63, 32A10, 14P10 (Secondary)}

\keywords{non-commutative set and function,
analytic map, proper map, rigidity,
linear matrix inequality,
several complex variables, free analysis, free real algebraic geometry}

\date{9 November 2010}

\begin{abstract} 
This paper concerns {\it analytic free maps}. These 
 maps
are free analogs of classical analytic functions in several
complex variables, and
 are defined in terms of non-commuting variables
amongst which there are no relations - they are 
\emph{free} variables. 
Analytic  free maps
include vector-valued
polynomials in free (non-commuting) variables and
form a canonical class of mappings from
one non-commutative domain $\cD$ in say $g$ variables
to another non-commutative domain $\tilde \cD$ in $\tg$ variables.

As a natural extension of the usual notion,
an analytic free  map is proper if it maps 
the boundary of $\cD$ into the boundary of $\tilde\cD.$
Assuming that both domains contain $0$, we show that
if $f:\cD\to\tilde\cD$ is a proper analytic free map, and $f(0)=0,$
then $f$ is one-to-one.
Moreover, if also $g=\h$,  then
$f$ is invertible and $f^{-1}$ is also an analytic free map.
These conclusions on the map $f$ are the strongest
possible without additional assumptions on the domains
$\cD$ and $\tilde \cD$.
\end{abstract}

\maketitle




\section{Introduction}
 \label{sec:intro}
  The notion of an analytic, free  or non-commutative,
  map arises naturally in free
  probability,
  the study of non-commutative (free) rational functions
  \cite{BGM,Vo1,Vo2,AIM,KVV},
 and
systems theory \cite{HBJP}.

  In this note
  rigidity results for such functions  paralleling those
  for their classical commutative counterparts are established.
   The free setting leads to substantially stronger results.
  Namely, if $f$ is a  proper analytic free  map from
  a non-commutative domain in $g$ variables
  to another in $\tg$ variables, then $f$ is injective
  and $\tg \ge g$. 
  If in addition 
  $\tg=g$, then  $f$  is onto and
  has an inverse which is itself
  a (proper) analytic free map.
 This injectivity conclusion contrasts markedly to the classical
 case where a
 (commutative) \emph{proper} 
  analytic function $f$ from one domain in $\mathbb C^g$
  to another in $\mathbb C^g,$  need not be injective,
  although it must be onto. For  classical theory of 
  some commutative proper analytic maps
  see \cite{DAn}.

 The definitions as used in this paper are given in the
 following section. The main result of the paper is
 in Section \ref{subsec:main-result}. Analytic free analogs
 of classical (commutative) rigidity theorems is the
 theme of Section \ref{sec:analogs}.  The article
 concludes with examples  in Section \ref{sec:Examples},
 all of which involve linear matrix inequalities (LMIs).

\section{Free  Maps}
 \label{sec:maps}
  This section contains the background on
  non-commutative sets and on  {\it free maps}
  at the level of generality needed for this paper.
  As we shall see, free maps which are continuous are also analytic
  in several senses, a fact which (mostly) justifies the 
  terminology  analytic free map in the introduction.
  Indeed one typically thinks of free maps
  as being analytic, but in a weak sense.

  The discussion borrows heavily from
the  recent basic work of
Voiculescu \cite{Vo1, Vo2} and of
Kalyuzhnyi-Verbovetski\u\i{} and Vinnikov
\cite{KVV}, see also the references therein.
These papers contain a power series approach
to free maps
and for more on this one can
see Popescu \cite{Po1,Po2}, or also \cite{HKMS,HKM1}.

\subsection{Non-commutative Sets and Domains}
 \label{subsec:sets}
  Fix a positive integer $g$.
  Given a positive integer $n$, let $\matng$ denote
  $g$-tuples of  $n\times n$ matrices.
  Of course, $\matng$ is naturally identified with
 $M_n(\mathbb C) \otimes\mathbb C^g.$

\renewcommand{\subset}{\subseteq}
   A sequence $\cU=(\cU (n))_{n\in\N},$ where $\cU (n) \subset \matng$,
 is a {\bf non-commutative set} \index{non-commutative set}
 if it is \index{closed with respect to unitary similarity}
 {\bf closed with respect to simultaneous unitary similarity}; i.e.,
 if $X\in \cU (n)$ and $U$ is an $n\times n$ unitary matrix, then
\[
   U^\ast X U =(U^\ast X_1U,\dots, U^\ast X_gU)\in \cU (n);
\]
 and if it is \index{closed with respect to direct sums}
 {\bf closed with respect to direct sums}; i.e.,
  if $X\in \cU (n)$ and $Y\in \cU(m)$ implies
 \[
   X\oplus Y = \begin{pmatrix} X & 0\\ 0 & Y \end{pmatrix}
     \in \cU(n+m).
 \]
 
  Non-commutative sets differ from
  the fully matricial $\C^g$-sets of Voiculescu
 \cite[Section 6]{Vo1} in that the latter are closed
 with respect to simultaneous similarity, not just
    simultaneous \emph{unitary} similarity.
  Remark \ref{rem:sim-vs-unitary} below  briefly discusses  the significance
  of this distinction for the results on proper analytic
  free maps in this paper.

   The non-commutative set $\cU$ is a
  {\bf non-commutative domain} if each
  $\cU (n)$ is open and connected.
  \index{non-commutative domain}
 Of course the sequence $\matg=(\matng)$
 is  itself a non-commutative domain. Given $\varepsilon>0$,
 the set $\cN_\varepsilon = (\cN_\varepsilon(n))$ given by
\beq\label{eq:nbhd}
  \cN_\varepsilon(n)=\big\{X\in\matng : \sum X_j X_j^\ast < \varepsilon^2  \big\}
\eeq
 is a non-commutative domain which we
 call the
  {\bf non-commutative $\varepsilon$-neighborhood of $0$ in $\mathbb C^g$}.
  \index{non-commutative neighborhood of $0$}
  The non-commutative set $\cU$ is {\bf bounded}
  \index{bounded, non-commutative set} if there
  is a $C\in\R$ such that
\beq\label{eq:bd}
   C^2 -\sum X_j X_j^\ast \succ 0
 \eeq
   for every $n$ and $X\in\cU(n)$. Equivalently, for
   some $\lambda\in\RR$, we have $\cU\subset \cN_\lambda$.
  Note that this condition is stronger than asking
 that each $\cU(n)$ is bounded.

  Let $\bCx$ denote the $\C$-algebra freely generated by $g$
non-commuting letters $x=(x_1,\ldots,x_g)$. Its elements are
linear combinations of words in $x$ and are called
{\bf polynomials}.
  Given an $r\times r$ matrix-valued polynomial
  $p\in M_r(\mathbb C) \otimes \bCx$ with
  $p(0)=0$, let $\cD(n)$ denote the connected
  component of
\[
  \{X\in \matng : I+p(X)+p(X)^*  \succ 0\}
\]
  containing the origin.
  The sequence $\cD=(\cD(n))$ is a non-commutative domain
  which is semi-algebraic in nature.
  Note that $\cD$ contains an $\varepsilon>0$ neighborhood
  of $0,$ and that the choice
 \[
   p=  \frac{1}{\varepsilon}
    \begin{pmatrix}\;\; 0_{g\times g}  & \begin{matrix} x_1 \\ \vdots \\x_g\end{matrix} \\
            \;\;  0_{1\times g}  & 0_{1\times 1} \end{pmatrix}
 \]
  gives $\cD = \cN_\varepsilon$.
  Further examples of natural non-commutative domains
  can be generated by considering non-commutative polynomials
  in both the variables $x=(x_1,\dots,x_g)$
  and their formal adjoints, $x^*=(x_1^*,\dots,x_g^*)$.
  The case of domains determined
  by linear matrix inequalities appears in Section \ref{sec:Examples}.

\subsection{Free Mappings}
 \label{subsec:nc-maps}
 Let $\cU$ denote a non-commutative subset of $\matg$
 and let $\h$ be a positive integer.
 A
 {\bf \nca}\index{\nca}
  $f$ from $\cU$ into $\math$ is a sequence
 of functions $\fn:\cU(n) \to\matnh$ which
  {\bf respects intertwining maps}; i.e.,
 if $X\in\cU(n)$, $Y\in\cU(m)$, $\Gamma:\mathbb C^m\to\mathbb C^n$,
  and
 \[
  X\Gamma=(X_1\Gamma,\dots, X_g\Gamma)
   =(\Gamma Y_1,\dots, \Gamma Y_g)=\Gamma Y,
 \]
  then $\fn(X) \Gamma =  \Gamma \fm (Y)$.
 \index{respects intertwining maps}
  Note if $X\in\cU(n)$ it is natural
  to write simply $f(X)$ instead of
 the more cumbersome $\fn(X)$ and
 likewise $f:\cU\to \math$.  In a similar fashion,
 we will often write $f(X)\Gamma=\Gamma f(Y).$

\begin{remark}\rm
 \label{rem:fasvector}
  Each $\fn$ can be represented as
\[
  \fn=\begin{pmatrix} \fn_1 \\ \vdots \\ \fn_{\h} \end{pmatrix}
\]
 where $\fn_j :\cU(n)\to M_n(\mathbb C)$.  Of course, for each
 $j$, the sequence $(\fn_j)$ is a \nca
 $f_j:\cU\to M(\mathbb C)$ with
 $\fjn=\fn_j$.  In particular, if $f:\cU \to \math,$
  $X\in\cU(n)$, and $v=\sum e_j \otimes v_j$, then
\[
  f(X)^* v =\sum f_j(X)^* v_j.
\]
\end{remark}

   Let $\cU$ be a given non-commutative  subset of  $\matg$
   and suppose $f=(\fn)$ is a sequence of functions
  $\fn:\cU(n)\to \matnh$.  The sequence $f$ {\bf respects direct sums}
  if, for each $n,m$ and $X\in\cU(n)$ and  $Y\in \cU(m),$
 \[
   f(X\oplus Y)=f(X) \oplus f(Y).
 \]
   Similarly, $f$
   {\bf respects similarity} if for each $n$ and
  $X,Y\in \cU(n)$ and invertible $n\times n$
  matrix $S$
  such that $XS=SY$,
\[
   f(X) S= Sf(Y).
\]
  The following proposition gives an alternate characterization
  of \ncasp

 \begin{proposition}
  \label{prop:nc-map-alt}
   Suppose $\cU$ is a non-commutative subset of $\matg$.  A sequence
   $f=(\fn)$  of functions $\fn:\cU(n)\to \matnh$
  is a \nca if and only if it
  respects direct sums and similarity.
 \end{proposition}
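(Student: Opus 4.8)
The plan is to prove the two implications separately, the forward direction being essentially a bookkeeping exercise and the reverse direction requiring the real content.

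For the forward implication, suppose $f=(\fn)$ is a free map. To see that $f$ respects direct sums, take $X\in\cU(n)$, $Y\in\cU(m)$, and apply the intertwining condition with $\Gamma:\mathbb C^n\to\mathbb C^{n+m}$ the inclusion of the first coordinate block. One checks $(X\oplus Y)\Gamma=\Gamma X$, so $\fn[n+m](X\oplus Y)\Gamma=\Gamma \fn(X)$, which identifies the upper-left block of $f(X\oplus Y)$ with $f(X)$; symmetrically the lower-right block is $f(Y)$, and using the inclusions of $\mathbb C^n$ and $\mathbb C^m$ as complementary coordinate subspaces together shows the off-diagonal blocks vanish, giving $f(X\oplus Y)=f(X)\oplus f(Y)$. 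For respecting similarity, if $X,Y\in\cU(n)$ and $S$ is invertible with $XS=SY$, then $S$ itself plays the role of the intertwining map $\Gamma$ (here $m=n$), and the defining property gives directly $f(X)S=Sf(Y)$. So a free map respects direct sums and similarity.

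The reverse implication is the heart of the matter: assume $f$ respects direct sums and similarity, and let $X\in\cU(n)$, $Y\in\cU(m)$, $\Gamma:\mathbb C^m\to\mathbb C^n$ with $X\Gamma=\Gamma Y$; we must show $f(X)\Gamma=\Gamma f(Y)$. The standard trick is to pass to $Z=X\oplus Y\in\cU(n+m)$ and conjugate by a well-chosen invertible matrix so that the intertwining relation becomes an honest similarity on $\cU(n+m)$, to which the similarity hypothesis applies, after which respecting direct sums lets us read off the conclusion on the blocks. Concretely, for a scalar $t$ consider the block upper-triangular matrix $S_t=\begin{pmatrix} I_n & t\Gamma \\ 0 & I_m\end{pmatrix}$, which is invertible with inverse $\begin{pmatrix} I_n & -t\Gamma \\ 0 & I_m\end{pmatrix}$. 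Using $X\Gamma=\Gamma Y$ one computes $S_t^{-1}(X\oplus Y)S_t = (X\oplus Y)$, i.e. $S_t$ commutes with $Z$; this alone is not yet enough, so instead one takes $S=\begin{pmatrix} I_n & \Gamma \\ 0 & I_m\end{pmatrix}$ and checks that $S^{-1}ZS$ is the matrix with diagonal blocks $X,Y$ and upper-right block $X\Gamma-\Gamma Y=0$ — the point being to compare $Z$ with a conjugate that still lies in $\cU(n+m)$. The cleaner route, which I expect to use, is: since $\cU$ is closed under direct sums, $Z=X\oplus Y\in\cU(n+m)$; apply the similarity hypothesis to the pair $Z$ and $S^{-1}ZS$ when both lie in $\cU(n+m)$ and $S$ witnesses their similarity, to get $f(S^{-1}ZS)S^{-1}=S^{-1}f(Z)$, hence $f(S^{-1}ZS)=S^{-1}f(Z)S$; then expand $f(Z)=f(X)\oplus f(Y)$ by respecting direct sums, and likewise recognize $f(S^{-1}ZS)$ via its block structure, and extract the $(1,2)$ block to obtain exactly $f(X)\Gamma=\Gamma f(Y)$.

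The main obstacle is the one lurking in the previous paragraph: the similarity hypothesis as stated only applies to pairs $X,Y$ that \emph{both} lie in $\cU(n)$, but $S^{-1}ZS$ need not lie in $\cU(n+m)$ even though $Z$ does, since $\cU$ is only assumed closed under \emph{unitary} similarity, not arbitrary similarity. The resolution is the classical device of absorbing the non-unitary part by tensoring with a large ampliation: replace $\Gamma$ by a contraction after scaling, work inside $Z^{(N)}=Z\oplus\cdots\oplus Z$ ($N$ copies) where one can build a genuine unitary $U$ with $U^{-1}Z^{(N)}U$ having $S^{-1}ZS$ as a compression/block, and push the argument through using that $f$ respects both unitary similarity (a special case of the similarity hypothesis) and direct sums; letting the scaling parameter run and using that the identity $f(X)\Gamma=\Gamma f(Y)$ is homogeneous of degree one in $\Gamma$ removes the contraction normalization at the end. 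I would set up this ampliation carefully, as it is the only step where the distinction between unitary and general similarity — flagged in the remark preceding the statement — actually bites.
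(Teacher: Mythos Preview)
Your forward direction is fine and matches the paper's sketch.

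For the reverse direction you actually had the whole proof in hand and then talked yourself out of it. You computed that with
\[
S=\begin{pmatrix} I_n & \Gamma \\ 0 & I_m\end{pmatrix},
\qquad Z=X\oplus Y,
\]
the intertwining relation $X\Gamma=\Gamma Y$ gives $S^{-1}ZS=Z$, i.e.\ $ZS=SZ$. At that point you wrote ``this alone is not yet enough,'' but it is exactly enough: the similarity hypothesis applies to any pair $X',Y'\in\cU(n+m)$ with $X'S=SY'$, and here you may take $X'=Y'=Z$, which certainly lies in $\cU(n+m)$ by closure under direct sums. The hypothesis then yields $f(Z)S=Sf(Z)$. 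Since $f$ respects direct sums, $f(Z)=f(X)\oplus f(Y)$, and reading the $(1,2)$ block of
\[
\begin{pmatrix} f(X) & 0 \\ 0 & f(Y)\end{pmatrix}
\begin{pmatrix} I & \Gamma \\ 0 & I\end{pmatrix}
=
\begin{pmatrix} I & \Gamma \\ 0 & I\end{pmatrix}
\begin{pmatrix} f(X) & 0 \\ 0 & f(Y)\end{pmatrix}
\]
gives $f(X)\Gamma=\Gamma f(Y)$. This is precisely the paper's argument.

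The ``main obstacle'' you raise---that $S^{-1}ZS$ might fall outside $\cU(n+m)$ because $\cU$ is only closed under \emph{unitary} similarity---does not arise, since $S^{-1}ZS=Z$ on the nose. Consequently the whole ampliation-and-contraction program in your final paragraph is unnecessary; drop it. The distinction between unitary and general similarity flagged in the paper's remark matters for extending $f$ to the similarity envelope of $\cU$, not for this proposition.
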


\begin{proof}
  Observe $f(X)\Gamma=\Gamma f(Y)$ if and only if
 \[
    \begin{pmatrix} f(X) & 0 \\ 0  & f(Y) \end{pmatrix}
    \begin{pmatrix} I & \Gamma \\ 0 & I \end{pmatrix}
   = \begin{pmatrix} I & \Gamma \\ 0 & I \end{pmatrix}
    \begin{pmatrix} f(X) & 0 \\ 0  & f(Y) \end{pmatrix}.
 \]
  Thus if $f$ respects direct sums and similarity, then
 $f$ respects intertwining.

  On the other hand, if $f$ respects intertwining then,
 by choosing $\Gamma$ to be an appropriate projection,
 it is easily seen that
  $f$ respects direct sums too.
\end{proof}

\begin{remark}\rm
 \label{rem:sim-vs-unitary}
   Let $\cU$ be a non-commutative domain in $M(\C)^g$ and
  suppose $f:\cU\to M(\C)^{\tg}$ is a free map. If $X\in \cU$
  is similar to $Y$ with $Y=S^{-1}X S$, then we can 
  define $f(Y) = S^{-1}f(X)S$. In this way $f$ naturally
 extends to a free map on $\cH(\cU)\subset M(\C)^g$ defined by
\[
  \cH(\cU)(n)=\{Y\in M_n(\C)^g: \text{ there is an } X\in \cU(n)
     \text{ such that $Y$ is similar to $X$}\}.
\]
  Thus if $\cU$ is  a domain of holomorphy, then $\cH(\cU)=\cU$.

 On the other hand, because our results on proper analytic free maps
  to come depend strongly upon 
  the  non-commutative set $\cU$ itself,
   the distinction between non-commutative sets and
   fully matricial sets as in  \cite{Vo1} is important.
    See also  \cite{HM,HKM2}.
\end{remark}

 We close this subsection with the following simple observation.

\begin{proposition}
 \label{prop:range}
  If $\cU$ is a non-commutative subset of $\matg$ and
  $f:\cU\to \math$ is a \ncacomma then the range of $f$, equal to
  the sequence $f(\cU)=\big(f(\cU(n))\big)$, is itself
  a non-commutative subset of $\math$.
\end{proposition}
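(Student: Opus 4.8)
The plan is to verify directly the two defining closure properties of a non-commutative set for the sequence $f(\cU)=(f(\cU(n)))$, namely closure under simultaneous unitary similarity and closure under direct sums, using Proposition~\ref{prop:nc-map-alt} to translate between the intertwining formulation of a free map and the ``respects direct sums and similarity'' formulation.

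First I would handle unitary similarity. Suppose $Z\in f(\cU)(n)$, say $Z=f(X)$ with $X\in\cU(n)$, and let $U$ be an $n\times n$ unitary. Since $\cU$ is a non-commutative set, $U^\ast X U\in\cU(n)$. The identity $X U = U\,(U^\ast X U)$ exhibits $U$ as an invertible intertwiner, so because $f$ respects similarity (a special case of respecting intertwining maps, or invoke Proposition~\ref{prop:nc-map-alt}) we get $f(X)\,U = U\,f(U^\ast X U)$, hence $U^\ast Z U = U^\ast f(X) U = f(U^\ast X U)\in f(\cU)(n)$.

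Next I would handle direct sums. Suppose $Z\in f(\cU)(n)$ and $W\in f(\cU)(m)$, say $Z=f(X)$ and $W=f(Y)$ with $X\in\cU(n)$, $Y\in\cU(m)$. Since $\cU$ is closed under direct sums, $X\oplus Y\in\cU(n+m)$. By Proposition~\ref{prop:nc-map-alt}, $f$ respects direct sums, so $f(X\oplus Y)=f(X)\oplus f(Y)=Z\oplus W$, which therefore lies in $f(\cU)(n+m)$. Combining the two paragraphs shows $f(\cU)$ is a non-commutative subset of $\math$.

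There is essentially no obstacle here: the statement is a formal consequence of the definitions together with Proposition~\ref{prop:nc-map-alt}, and the only mild point to be careful about is that it suffices to use intertwiners that are unitary (resp. that are coordinate projections/injections onto summands) rather than general invertible $S$, so no hypothesis beyond $\cU$ being a non-commutative set is needed. One could alternatively give a single unified argument by checking the intertwining condition for $f(\cU)$ directly from that for $\cU$ and $f$, but splitting into the two closure axioms is cleaner to write.
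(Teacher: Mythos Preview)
Your argument is correct; the paper itself treats this proposition as a ``simple observation'' and omits the proof entirely, so your direct verification of the two closure axioms via Proposition~\ref{prop:nc-map-alt} is exactly the intended (routine) argument.
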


\subsection{A Continuous \NCA is  Analytic}
  Let $\cU\subset \matg$ be a non-commutative set.
  A \nca   $f:\cU\to \math$ is {\bf continuous} if each
  $\fn:\cU(n)\to \matnh$ is continuous. \index{continuous}
  Likewise,
  if $\cU$ is a non-commutative domain, then
  $f$ is called {\bf analytic} if each $\fn$ is analytic.
 \index{analytic}
This implies the existence of directional derivatives for all directions
 at each point in the domain, and this is the property we shall use later
below.

\begin{proposition}
 \label{prop:continuous-analytic}
  Suppose $\cU$ is a non-commutative domain in $\matg$.
\ben[\rm (1)]
\item
  A continuous \nca $f:\cU\to \math$ is analytic.
\item If $X\in\cU(n)$, and $H\in\matng$ has sufficiently small norm,
then
 \[
  f\begin{pmatrix} X & H \\ 0 & X\end{pmatrix}
   = \begin{pmatrix} f(X) & f^\prime(X)[H] \\ 0 & f(X)\end{pmatrix}.
 \]
\een
\end{proposition}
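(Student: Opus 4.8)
The plan is to prove (1) and (2) together: first read off, from the intertwining axiom alone, the block structure of $f$ on $2\times 2$ block matrices, and then feed this into the standard principle that a continuous, everywhere complex Gâteaux differentiable map between open subsets of finite dimensional complex spaces is holomorphic. Throughout fix $X\in\cU(n)$; since $X\oplus X\in\cU(2n)$ (closure under direct sums) and $\cU(2n)$ is open, the matrix $\begin{pmatrix}X & H\\ 0 & Y\end{pmatrix}$ lies in $\cU(2n)$ whenever $H$ has small norm and $Y$ is near $X$, and these are the only points of $\cU(2n)$ I will need.

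I would begin with the block structure. Applying $f(A)\Gamma=\Gamma f(B)$ with $A=\begin{pmatrix}X & H\\ 0 & Y\end{pmatrix}$, $B=X$, $\Gamma=\begin{pmatrix}I\\0\end{pmatrix}$ (which intertwines, as $A\Gamma=\Gamma X$) forces the first block column of $f(A)$ to be $\begin{pmatrix}f(X)\\0\end{pmatrix}$; applying it with $A=Y$, $B=\begin{pmatrix}X & H\\0 & Y\end{pmatrix}$, $\Gamma=(0\ \ I)$ forces the second block row of $f\begin{pmatrix}X & H\\0 & Y\end{pmatrix}$ to be $(0\ \ f(Y))$. Hence there is a unique block $\Delta(X,Y)[H]$ with
\[
  f\begin{pmatrix}X & H\\ 0 & Y\end{pmatrix}=\begin{pmatrix} f(X) & \Delta(X,Y)[H]\\ 0 & f(Y)\end{pmatrix},
\]
and no continuity has been used yet. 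Conjugating $\begin{pmatrix}X & H\\0 & Y\end{pmatrix}$ by $I\oplus \eps I$ and invoking that $f$ respects similarity (Proposition \ref{prop:nc-map-alt}) gives $\Delta(X,Y)[\eps H]=\eps\,\Delta(X,Y)[H]$, so $\Delta(X,Y)[\,\cdot\,]$ is complex homogeneous. Finally the intertwiner $\Gamma=\begin{pmatrix}I\\I\end{pmatrix}$ satisfies $\begin{pmatrix}X & Y-X\\0 & Y\end{pmatrix}\Gamma=\Gamma Y$, and pushing this through $f$ via the displayed formula yields the difference quotient identity $\Delta(X,Y)[Y-X]=f(Y)-f(X)$.

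Now I would bring in continuity. If $f$ is continuous then each $\fm$ is, and in particular $(X,Y,H)\mapsto\Delta(X,Y)[H]$ is continuous, being a block of $f[2n]$. Putting $Y=X+tH$ in the difference quotient identity and using homogeneity gives $\Delta(X,X+tH)[H]=\tfrac1t\bigl(f(X+tH)-f(X)\bigr)$ for small $t\neq0$; letting $t\to 0$ shows the complex directional derivative of $\fn$ at $X$ in direction $H$ exists and equals $\Delta(X,X)[H]$. Thus $\fn$ is complex Gâteaux differentiable everywhere on $\cU(n)$ with derivative continuous in the base point; combined with continuity of $\fn$, this makes $\fn$ holomorphic on $\cU(n)$ (Osgood's lemma together with the fact that Gâteaux differentiability along the coordinate directions gives separate holomorphy), which is (1). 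Moreover the derivative is $f'(X)[H]=\Delta(X,X)[H]$, which by the displayed formula with $Y=X$ is precisely the $(1,2)$ block of $f\begin{pmatrix}X & H\\0 & X\end{pmatrix}$, which is (2).

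The step needing real care is this last identification of the algebraically-defined block $\Delta(X,X)[H]$ with the honest derivative: the intertwining and similarity arguments produce $\Delta$ only as a formal object, and it is the difference quotient identity, together with continuity of $f[2n]$ used to pass to the limit $Y\to X$, that upgrades it to a derivative. The remaining points are routine: checking that $\begin{pmatrix}X & H\\0 & Y\end{pmatrix}$ stays in $\cU(2n)$ under the small perturbations and similarities used, and noting that $\R$- or $\C$-linearity (not just homogeneity) of $H\mapsto\Delta(X,X)[H]$ is automatic once $\fn$ is known to be Fréchet holomorphic.
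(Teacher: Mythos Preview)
Your argument is correct and follows the same strategy as the paper: read off the difference quotient as the $(1,2)$ block of $f[2n]$ evaluated at an upper-triangular $2\times2$ point, then use continuity of $f[2n]$ to pass to the limit. The paper obtains the block structure in one stroke via the similarity $S=\begin{pmatrix}I&\Gamma\\0&I\end{pmatrix}$ with $\Gamma=z^{-1}I$ (its Lemma~\ref{lem:2x2}), applied to $\begin{pmatrix}X+zH&H\\0&X\end{pmatrix}$, whereas you assemble the same information from the intertwiners $\begin{pmatrix}I\\0\end{pmatrix}$, $(0\ I)$, $\begin{pmatrix}I\\I\end{pmatrix}$ and the diagonal similarity $I\oplus\eps I$; the content is the same, and you are in fact slightly more explicit than the paper about the passage from existence of complex directional derivatives to holomorphy.
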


 The proof invokes the following lemma which also plays
 an important role in the next subsection.

\begin{lemma}
 \label{lem:2x2}
  Suppose $\cU\subset \matg$ is a non-commutative set
 and $f:\cU\to \math$ is a \ncap
  Suppose $X\in\cU(n)$, $Y\in \cU(m)$, and
  $\Gamma$ is an $n\times m$ matrix. Let
\beq\label{eq:2x21}
    C_j = X_j\Gamma -\Gamma Y_j, \quad
    Z_j = \begin{pmatrix} X_j & C_j \\ 0 & Y_j \end{pmatrix}.
 \eeq
  If
  $Z=(Z_1,\dots,Z_g)\in \cU(n+m)$, then
\beq\label{eq:2x22}
  f_j(Z)=
\begin{pmatrix}
    f_j(X) & f_j(X)\Gamma -\Gamma f_j(Y) \\
                        0 & f_j(Y)
\end{pmatrix}
\eeq
\end{lemma}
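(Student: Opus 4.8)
The plan is to exhibit $Z$ as a similarity transform of the direct sum $X\oplus Y$ and then invoke the defining property of a free map. First I would introduce the $(n+m)\times(n+m)$ matrix
\[
  S=\begin{pmatrix} I_n & -\Gamma \\ 0 & I_m \end{pmatrix},
  \qquad
  S^{-1}=\begin{pmatrix} I_n & \Gamma \\ 0 & I_m \end{pmatrix},
\]
and check by a routine block multiplication that $Z_jS=S(X_j\oplus Y_j)$ for each $j$: the $(1,2)$ block of $Z_jS$ is $-X_j\Gamma+C_j$, which equals $-\Gamma Y_j$ precisely because $C_j=X_j\Gamma-\Gamma Y_j$ as in \eqref{eq:2x21}. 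Hence $ZS=S(X\oplus Y)$, i.e. the invertible matrix $S$ intertwines $Z$ and $X\oplus Y$.

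Next, $\cU$ is closed under direct sums, so $X\oplus Y\in\cU(n+m)$, while $Z\in\cU(n+m)$ by hypothesis. Since $f$ respects intertwining maps (equivalently, respects similarity and direct sums, by Proposition \ref{prop:nc-map-alt}), the relation $ZS=S(X\oplus Y)$ yields $f(Z)S=Sf(X\oplus Y)=S\big(f(X)\oplus f(Y)\big)$. As $S$ is invertible, this gives $f(Z)=S\big(f(X)\oplus f(Y)\big)S^{-1}$.

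Finally I would pass to the $j$-th coordinate (Remark \ref{rem:fasvector}) and multiply out the three blocks:
\[
  S\big(f_j(X)\oplus f_j(Y)\big)S^{-1}
  =\begin{pmatrix} I_n & -\Gamma \\ 0 & I_m \end{pmatrix}
   \begin{pmatrix} f_j(X) & 0 \\ 0 & f_j(Y) \end{pmatrix}
   \begin{pmatrix} I_n & \Gamma \\ 0 & I_m \end{pmatrix}
  =\begin{pmatrix} f_j(X) & f_j(X)\Gamma-\Gamma f_j(Y) \\ 0 & f_j(Y) \end{pmatrix},
\]
which is exactly \eqref{eq:2x22}.

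I do not expect a genuine obstacle here; the proof is a short computation once the intertwiner $S$ is found. The only points requiring care are that $S$ is invertible (it is, being unitriangular) and that both $Z$ and $X\oplus Y$ really lie in $\cU(n+m)$, so that the free map axiom applies — the hypothesis $Z\in\cU(n+m)$ supplies exactly what is missing, and it is a genuine assumption since the upper-triangular tuple $Z=(Z_1,\dots,Z_g)$ need not a priori belong to $\cU$.
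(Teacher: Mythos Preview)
Your argument is correct and is essentially the same as the paper's: exhibit $Z$ as a similarity of $X\oplus Y$ via the unitriangular matrix $\begin{pmatrix} I & \pm\Gamma \\ 0 & I\end{pmatrix}$ and apply the intertwining axiom. The only cosmetic difference is that the paper takes $S=\begin{pmatrix} I & \Gamma \\ 0 & I\end{pmatrix}$ (your $S^{-1}$) and writes $\tilde Z=SZS^{-1}$, whereas you work with the inverse conjugation; your version is arguably cleaner and you are more explicit about why both $Z$ and $X\oplus Y$ lie in $\cU(n+m)$.
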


This formula generalizes to larger block matrices.

\begin{proof}
  With
 \[
    S=\begin{pmatrix} I & \Gamma \\ 0 & I \end{pmatrix}
 \]
  we have
 \[
   \tilde{Z}_j= \begin{pmatrix} X_j & 0 \\ 0 & Y_j \end{pmatrix}
        = S Z_j S^{-1}.
 \]
  Thus, writing $f=(f_1,\dots,f_{\h})^T$ and using
  the fact that $f$ respects intertwining maps,  for each $j$,
 \[
    f_j(Z) =  S f_j(\tilde{Z}) S^{-1}
         =  \begin{pmatrix} f_j(X) & f_j(X)\Gamma -\Gamma f_j(Y) \\
                        0 & f_j(Y) \end{pmatrix}.\qedhere
 \]
\end{proof}

\begin{proof}[Proof of Proposition {\rm\ref{prop:continuous-analytic}}]
 Fix $n$ and $X\in \cU(n)$.
 Because $\cU(2n)$ is open and $X\oplus X\in \cU(2n)$,
for every $H\in \matng$ of sufficiently small norm
 the tuple with $j$-th entry
\[
  \begin{pmatrix} X_j & H_j \\ 0 & X_j \end{pmatrix}
\]
 is in $\cU(2n)$. Hence,
 for $z\in\mathbb C$ of small modulus, the tuple $Z(z)$ with
 $j$-th entry
\[
  Z_j(z)=\begin{pmatrix} X_j+zH_j & H_j \\ 0 & X_j  \end{pmatrix}
\]
 is in $\cU(2n)$.  Note that the choice (when $z\ne 0$) of
 $\Gamma(z)=\frac{1}{z}$,
 $X=X+zH$ and $Y=X$
  in Lemma \ref{lem:2x2}
 gives this $Z(z)$. Hence,  by Lemma \ref{lem:2x2},
\[
  f(Z(z))= \begin{pmatrix} f(X+zH) & \frac{f(X+zH)-f(X)}{z} \\ 0 & f(X)
      \end{pmatrix}.
\]
 Since $Z(z)$ converges as $z$ tends to $0$ and $f[2n]$
 is assumed continuous, the limit
\[
  \lim_{z\to 0} \frac{f(X+zH)-f(X)}{z}
\]
 exists. This proves that $f$ is analytic at $X$.  It also
 establishes the moreover portion of the proposition.
\end{proof}

\begin{remark}\rm
  Kalyuzhnyi-Verbovetski\u\i{} and Vinnikov \cite{KVV} are developing
 general results based on very weak hypotheses with the conclusion that
 $f$ is (in our language) an \cncap
 Here we will assume
 continuity whenever expedient.

 For perspective we mention power series.
 It is shown in \cite[Section 13]{Vo2}
 that an \cnca
 $f$ has a formal power series expansion
 in the non-commuting variables, which indeed is a powerful way
 to think of \cncasp  Voiculescu also gives elegant
 formulas for the coefficients of the power series expansion of $f$ in terms
 of clever evaluations of $f$. Convergence properties for bounded
\cncas are studied in \cite[Sections 14-16]{Vo2}; see also
\cite[Section 17]{Vo2} for a bad unbounded example. We do not dwell
 on this since power series are not essential to this paper.
\end{remark}

\section{A Proper \NCA is Bianalytic Free}
 \label{subsec:main-result}
   Given non-commutative domains $\cU$ and $\cV$ in
  $\matg$ and $\math$ respectively, a
  \nca $f:\cU\to\cV$ is {\bf proper} \index{proper}
  if each $\fn:\cU(n)\to \cV(n)$ is proper in the
  sense that if $K\subset \cV(n)$ is compact, then
  $f^{-1}(K)$ is compact.
  In particular,  for all $n$,
  if $(z_j)$ is a sequence in $\cU(n)$
  and $z_j\to\partial\cU(n)$, then
  $f(z_j)\to\partial\cV(n)$.
 In the case $g=\h$ and both $f$ and $f^{-1}$ are (proper)
 \cncas
 we say $f$ is 
 a 
{\bf bianalytic} free map.
  The following theorem is a central result of this paper.

\begin{theorem}
\label{thm:oneone}
  Let $\cU$ and $\cV$ be non-commutative domains containing $0$
  in $\matg$ and $\math$, respectively and
  suppose $f:\cU\to \cV$ is a \ncap
\begin{enumerate}[\rm (1)]
\item\label{it:1to1}
If $f$ is proper, then it is one-to-one,
  and $f^{-1}:f(\cU)\to \cU$ is a \ncap
\item\label{it:1to1ugly}
  If, for each $n$ and $Z\in\matnh$,
  the set $\fn^{-1}(\{Z\})$ has compact closure in $\cU$, 
  then $f$ is one-to-one
  and moreover, $f^{-1}:f(\cU)\to \cU$ is a \ncap
\item\label{it:xto1}
If $g=\h$ and $f:\cU\to\cV$ is
 proper and  continuous, then $f$ is bianalytic.
\end{enumerate}
\end{theorem}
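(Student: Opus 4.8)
The plan is to exploit the $2\times 2$ upper-triangular trick from Lemma~\ref{lem:2x2} and Proposition~\ref{prop:continuous-analytic} to turn properness into an injectivity statement, first at the level of kernels of the derivative. The key observation is that if $f(X)=f(Y)$ for some $X,Y\in\cU(n)$, then one should probe $f$ at the point $Z(z)$ with $j$-th entry $\begin{pmatrix} X_j & z(X_j-Y_j) \\ 0 & Y_j\end{pmatrix}$ — wait, more precisely, following the computation in the proof of Proposition~\ref{prop:continuous-analytic}, one looks at $\begin{pmatrix} X_j & C_j \\ 0 & Y_j\end{pmatrix}$ with $C_j = X_j\Gamma - \Gamma Y_j$ for a scalar $\Gamma = t \in \mathbb{R}$, and lets $t\to\infty$. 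By Lemma~\ref{lem:2x2}, $f$ evaluated there has upper-right block $f(X)t - t f(Y) = t(f(X)-f(Y))$. If $f(X)=f(Y)$ this block is identically $0$ for all $t$, so $f(Z(t))$ stays bounded; but I must check that $Z(t)\to\partial\cU(2n)$ as $t\to\infty$ (if it stays in $\cU(2n)$ at all), which would force $f(Z(t))\to\partial\cV(2n)$ by properness — contradiction unless the situation degenerates. So the real first step is to understand \emph{when} $Z(t)\in\cU(2n)$ for all $t$: since $Z(t)$ is similar (via $S=\begin{pmatrix} I & t\\ 0 & I\end{pmatrix}$, for the scalar block case, suitably tensored) to $X\oplus Y \in \cU(2n)$, and $\cU$ is only closed under \emph{unitary} similarity, membership is not automatic — this is exactly the subtlety flagged in Remark~\ref{rem:sim-vs-unitary}. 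I expect one handles this by instead taking $\Gamma$ to have small norm (or rescaling), so that $Z$ lands in $\cU(2n)$ by openness, and then extracts the needed conclusion by a connectedness/analytic-continuation argument along a path of such $\Gamma$'s, or by combining with the hypothesis $0\in\cU$ so that a whole segment can be used.

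For part~\eqref{it:1to1ugly}, the hypothesis is tailor-made: the fiber $\fn^{-1}(\{Z\})$ has compact closure in $\cU(n)$. Suppose $f(X)=f(Y)=:Z$ with $X\neq Y$ in $\cU(n)$. Then consider the one-parameter family $W(t)$ with blocks $\begin{pmatrix} X_j & t(X_j-Y_j)\\ 0 & Y_j\end{pmatrix}$ for $t\in[0,\infty)$; as noted, for each $t$ this tuple is similar to $X\oplus Y$ via the (non-unitary) $S_t$, and Lemma~\ref{lem:2x2} gives $f(W(t)) = S_t f(X\oplus Y) S_t^{-1} = S_t (Z\oplus Z) S_t^{-1}$. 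But $Z\oplus Z$ commutes with $S_t = \begin{pmatrix} I & tI\\ 0 & I\end{pmatrix}\otimes(\cdot)$ — no: $S_t$ commutes with $Z\oplus Z$ precisely because both blocks equal $Z$. Hence $f(W(t)) = Z\oplus Z$ for \emph{all} $t$, a fixed matrix. So the whole ray $\{W(t): t\geq 0\}$ lies in $\fn^{-1}(\{Z\oplus Z\})$ (using that each $W(t)\in\cU(2n)$ — again needing closure under this similarity, which must be argued from the path staying near $X\oplus Y$ for small $t$ and then bootstrapped, OR from the structure of $\cU$; this is the crux). If $X\neq Y$ the ray is unbounded, contradicting compactness of the closure of that fiber. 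Thus $X=Y$, giving injectivity; and injectivity of every $\fn$ together with the fact (Proposition~\ref{prop:range}) that $f(\cU)$ is a non-commutative set lets one define $f^{-1}$ componentwise and check it respects direct sums and similarity (Proposition~\ref{prop:nc-map-alt}), hence is a free map.

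Part~\eqref{it:1to1} then follows from~\eqref{it:1to1ugly}: properness of each $\fn$ means $\fn^{-1}(K)$ is compact for compact $K\subset\cV(n)$, and taking $K=\{Z\}$ (a one-point set, which is compact in $\cV(n)$ when $Z\in\cV(n)$; and if $Z\notin\cV(n)$ the fiber is empty) shows the hypothesis of~\eqref{it:1to1ugly} holds. For part~\eqref{it:xto1}, with $g=\h$: $f$ is injective and analytic (by Proposition~\ref{prop:continuous-analytic}, since it is continuous), so by the classical inverse/open mapping theorem in several complex variables each $\fn$ is an open map onto its image $f(\cU)(n)$; properness forces $f(\cU)(n)$ to be closed in $\cV(n)$ as well (proper maps between, say, locally compact spaces are closed), and since $\cV(n)$ is connected and $f(\cU)(n)$ is nonempty, clopen, we get $f(\cU)=\cV$, i.e. $f$ is onto. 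Then $f^{-1}$ is a free map by~\eqref{it:1to1}, and it is analytic because the inverse of a biholomorphism is holomorphic at each level $n$; finally $f^{-1}$ is proper because $f$ is a homeomorphism at each level. The main obstacle, throughout, is the very first technical point: verifying that the upper-triangular tuples $W(t)$ (or $Z(t)$) actually belong to $\cU(2n)$ despite $\cU$ being closed only under \emph{unitary} similarity — I anticipate this is resolved by a homotopy/analyticity argument exploiting that $0\in\cU$ and that $\cU(2n)$ is open and connected, pushing the similarity parameter out from a neighborhood of the diagonal, rather than by any direct algebraic closure property.
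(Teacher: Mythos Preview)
Your approach is the paper's: the $2\times 2$ upper-triangular trick of Lemma~\ref{lem:2x2} with $\Gamma=tI$, together with the fiber-compactness hypothesis, yields injectivity, and the general-$\Gamma$ version shows $f^{-1}$ respects intertwining. However, the obstacle you flag as ``the crux'' --- whether $W(t)\in\cU(2n)$ for all $t$ --- has a much simpler resolution than the homotopy/analytic-continuation scheme you anticipate, and this is the one genuine gap in your write-up. The paper (Proposition~\ref{prop:oneone}) never claims $W(t)\in\cU(2n)$ for all $t$; it only observes that the set $\{W(t):t\in\mathbb C\}\cap\cU(2n)$, whatever it is, lies in the fiber $f[2n]^{-1}(\{f(X)\oplus f(Y)\})$ and therefore has compact closure \emph{in} $\cU(2n)$. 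If $X\neq Y$ this is impossible: either the entire affine line lies in $\cU(2n)$, making the set unbounded, or the open set $\{t:W(t)\in\cU(2n)\}$ has a finite boundary point $t_0$, and then $W(t_n)\to W(t_0)\in\partial\cU(2n)$ along a sequence $t_n$ inside, contradicting compact closure in $\cU(2n)$. No appeal to $0\in\cU$, connectedness, or bootstrapping is needed here.

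Two smaller points. First, to conclude that $f^{-1}$ is a free map you need the implication $f(X)\Gamma=\Gamma f(Y)\Rightarrow X\Gamma=\Gamma Y$ for \emph{arbitrary} rectangular $\Gamma$, not just scalars; this is why the paper states Proposition~\ref{prop:oneone} at that level of generality and then reads off both injectivity (taking $\Gamma=I$) and the intertwining property of $f^{-1}$ in one stroke. Your ``check direct sums and similarity via Proposition~\ref{prop:nc-map-alt}'' would still require the general-$\Gamma$ case for the similarity part. Second, for part~\eqref{it:xto1} the paper shortcuts your clopen argument by citing \cite[Theorem~15.1.5]{Rudin} (a proper holomorphic map between equidimensional domains is automatically surjective); your route via ``injective holomorphic $\Rightarrow$ open'' plus ``proper $\Rightarrow$ closed'' also works but leans on the nontrivial fact that an injective holomorphic map in several variables has nowhere-vanishing Jacobian.
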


\begin{corollary}
 \label{cor:bianalytic-free}
  Suppose $\cU$ and $\cV$ are non-commutative domains in
 $\matg$. If $f:\cU\to \cV$ 
  is a free map and if each $f[n]$ is bianalytic, 
  then $f$ is a bianalytic free map.
\end{corollary}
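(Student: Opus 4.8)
The plan is to exhibit $f$ together with the levelwise inverses as a pair of mutually inverse proper analytic free maps $f\colon\cU\to\cV$ and $f^{-1}\colon\cV\to\cU$; since $\cU$ and $\cV$ both lie in $\matg$, this is exactly the statement that $f$ is a bianalytic free map.

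First I would dispatch the routine points. Each $f[n]\colon\cU(n)\to\cV(n)$ is bianalytic, hence a homeomorphism of $\cU(n)$ onto $\cV(n)$; so $f$ is a continuous free map whose range is all of $\cV$, and each $f[n]$ is proper (for compact $K\subseteq\cV(n)$ the set $f[n]^{-1}(K)$ is the continuous image of $K$ under $f[n]^{-1}$, hence compact). Thus $f$ is a proper continuous free map, and by Proposition~\ref{prop:continuous-analytic} it is analytic. Likewise each $f[n]^{-1}\colon\cV(n)\to\cU(n)$ is bianalytic, hence continuous and proper; so \emph{once we know} that the sequence $f^{-1}=(f[n]^{-1})$ is a free map, Proposition~\ref{prop:continuous-analytic} makes it a proper analytic free map from $\cV$ onto $\cU$. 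Hence the whole matter reduces to showing that $f^{-1}$ respects intertwining maps.

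For that, take $A\in\cV(n)$, $B\in\cV(m)$ and an $n\times m$ matrix $\Gamma$ with $A\Gamma=\Gamma B$; the goal is $f^{-1}(A)\,\Gamma=\Gamma\,f^{-1}(B)$. Set $X=f^{-1}(A)\in\cU(n)$ and $Y=f^{-1}(B)\in\cU(m)$, so $f(X)=A$ and $f(Y)=B$, and write $X\Gamma-\Gamma Y$ componentwise. For $t\in\C$ let $Z(t)$ be the tuple with $j$-th entry
\[
 Z_j(t)=\begin{pmatrix} X_j & t(X_j\Gamma-\Gamma Y_j)\\ 0 & Y_j\end{pmatrix}.
\]
Since $Z(0)=X\oplus Y\in\cU(n+m)$ and $\cU(n+m)$ is open, $Z(t)\in\cU(n+m)$ for all $t$ near $0$; for such $t$, Lemma~\ref{lem:2x2} applied with $t\Gamma$ in the role of $\Gamma$, together with $f(X)\Gamma=A\Gamma=\Gamma B=\Gamma f(Y)$, gives $f(Z(t))=A\oplus B$, while $f(Z(0))=f(X\oplus Y)=f(X)\oplus f(Y)=A\oplus B$ as well. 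So $f(Z(t))=f(Z(0))$ for all small $t$, and injectivity of $f[n+m]$ forces $Z(t)=Z(0)$, i.e. $t(X_j\Gamma-\Gamma Y_j)=0$ for all small $t$, whence $X\Gamma=\Gamma Y$.

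The one substantive step is this last rigidity assertion — that the levelwise inverse automatically respects intertwinings — and I expect it to be the main point; it is handled by exactly the mechanism behind Theorem~\ref{thm:oneone}, namely running a one-parameter family of $2\times 2$ upper-triangular tuples through Lemma~\ref{lem:2x2} and collapsing it via injectivity of $f$. (If one is willing to assume $0\in\cU\cap\cV$, the second paragraph already shows that $f$ is a proper continuous free map with $g=\tg$, so one could instead quote Theorem~\ref{thm:oneone}\,(3) directly; the route above needs no such normalization.)
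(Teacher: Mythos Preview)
Your proof is correct. The paper's own argument is much shorter: it simply observes that each bianalytic $f[n]$ is proper, so $f$ is a proper (continuous) free map, and then invokes Theorem~\ref{thm:oneone}\eqref{it:xto1} directly to conclude that $f$ is a bianalytic free map. You instead re-derive the key intertwining property of $f^{-1}$ by hand, running the one-parameter family $Z(t)$ of Proposition~\ref{prop:oneone} through Lemma~\ref{lem:2x2} and collapsing it via the levelwise injectivity of $f[n+m]$ (which here is given) rather than via properness. The paper's route is more economical but leans on the standing hypothesis $0\in\cU\cap\cV$ in Theorem~\ref{thm:oneone}, which the corollary does not actually assume; your direct argument sidesteps that normalization and makes the result self-contained.
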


\begin{proof}
 Since each $\fn$ is bianalytic, each $\fn$ is proper. Thus $f$ is
 proper. Since also $f$ is a \ncacomma by Theorem
  \ref{thm:oneone}\eqref{it:xto1}  $f$ is
 a \fbap
\end{proof}

 Before proving Theorem \ref{thm:oneone} we establish the following
 preliminary result which is of independent interest and whose proof
 uses the full strength of Lemma \ref{lem:2x2}.

\begin{proposition}
\label{prop:oneone}
  Let $\cU\subset \matg$ be a non-commutative domain
  and suppose $f:\cU\to \math$ is a \ncap
   Suppose further that $X\in\cU(n)$, $Y\in \cU(m)$, $\Gamma$ is an
  $n\times m$ matrix, and
 \[
   f(X)\Gamma = \Gamma f(Y).
 \]
   If
   $f^{-1}\big(\{f(X)\oplus f(Y)\}\big)$ has compact closure
   in $\cU$,
   then $X\Gamma = \Gamma Y.$
\end{proposition}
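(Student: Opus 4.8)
The plan is to exploit the $2\times 2$ block structure of Lemma~\ref{lem:2x2} together with a scaling trick on $\Gamma$, in the spirit of the proof of Proposition~\ref{prop:continuous-analytic}. First I would set, for a scalar parameter $t$,
\[
 Z_j(t) = \begin{pmatrix} X_j & t(X_j\Gamma - \Gamma Y_j) \\ 0 & Y_j \end{pmatrix},
\]
which is the tuple produced by Lemma~\ref{lem:2x2} with the intertwiner $t\Gamma$ in place of $\Gamma$ (here I use that $X_j(t\Gamma)-(t\Gamma)Y_j = t\,C_j$). For $t$ in a suitable range these tuples lie in $\cU(n+m)$: at $t=0$ we have $X\oplus Y\in\cU(n+m)$, and $\cU(n+m)$ is open, so $Z(t)\in\cU(n+m)$ at least for small $|t|$; more usefully, the hypothesis $f(X)\Gamma=\Gamma f(Y)$ makes the off-diagonal entry of $f(Z(t))$ equal to $t\bigl(f(X)\Gamma-\Gamma f(Y)\bigr)=0$ for \emph{every} $t$ for which $Z(t)\in\cU$, so by Lemma~\ref{lem:2x2},
\[
 f(Z(t)) = \begin{pmatrix} f(X) & 0 \\ 0 & f(Y)\end{pmatrix} = f(X)\oplus f(Y).
\]

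The key point is then a connectedness/compactness argument: let $I\subset\R$ (or $\subset\C$) be the connected component of $0$ in $\{t : Z(t)\in\cU(n+m)\}$, which is open. For every $t\in I$ the tuple $Z(t)$ lies in $f^{-1}\bigl(\{f(X)\oplus f(Y)\}\bigr)$, whose closure is, by hypothesis, a compact subset $K$ of $\cU$. Hence $\{Z(t):t\in I\}$ is a bounded subset of $M_{n+m}(\C)^g$ whose closure is contained in $\cU(n+m)$. I claim this forces $I$ to be all of $\R$ (resp.\ $\C$): if some finite endpoint $t_0$ of $I$ existed, then $Z(t_0) = \lim_{t\to t_0} Z(t)$ would be a limit point of $\{Z(t):t\in I\}\subset K$, hence would lie in the compact set $K\subset\cU(n+m)$, so $Z(t_0)\in\cU(n+m)$; since $\cU(n+m)$ is open this would extend $Z(t)$ to a neighborhood of $t_0$, contradicting that $t_0$ is a boundary point of the component $I$. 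Therefore $Z(t)\in\cU(n+m)$ for all $t$, and in particular the family $\{Z(t):t\in\R\}$ is bounded in $M_{n+m}(\C)^g$. But $Z(t)$ is affine in $t$ with "slope" the tuple $(C_1,\dots,C_g)$ where $C_j=X_j\Gamma-\Gamma Y_j$; a bounded affine family must be constant, so $C_j=0$ for all $j$, i.e.\ $X\Gamma=\Gamma Y$, as desired.

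I expect the main obstacle to be the bookkeeping in the connectedness step: making sure that the component $I$ of $0$ in the parameter set is handled correctly (it is an interval, resp.\ a connected open subset of $\C$), and that "closure contained in the compact set $K\subset\cU$" genuinely rules out $I$ having a finite boundary point — the subtlety is that a priori $Z(t)$ could run off toward $\partial\cU(n+m)$, and it is precisely the compact-closure hypothesis on the fiber $f^{-1}(\{f(X)\oplus f(Y)\})$ that prevents this. Once $I$ is shown to be the whole line (or plane), the conclusion that an everywhere-bounded affine map is constant is immediate. One should also double-check at the outset that $Z(t)$ really is the Lemma~\ref{lem:2x2} tuple for the intertwiner $t\Gamma$, so that the lemma applies verbatim with $C_j$ replaced by $tC_j$; this is the only place the identity $f(X)\Gamma=\Gamma f(Y)$ is used, and it is what makes the off-diagonal block of $f(Z(t))$ vanish identically in $t$.
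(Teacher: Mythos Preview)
Your proposal is correct and follows essentially the same approach as the paper: define the one-parameter family $Z(t)$ via Lemma~\ref{lem:2x2} with intertwiner $t\Gamma$, use $f(X)\Gamma=\Gamma f(Y)$ to see that $f(Z(t))=f(X)\oplus f(Y)$ for every admissible $t$, and then invoke the compact-closure hypothesis to force the affine family $t\mapsto Z(t)$ to be bounded, hence constant. The paper compresses your connectedness/compactness step into a single sentence (``since this set has compact closure in $\cU$, it follows that $C=0$''), whereas you have carefully spelled out why the component $I$ of admissible parameters cannot have a finite boundary point; this is exactly the argument implicit in the paper's line.
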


\begin{proof}
  As in Lemma \ref{lem:2x2},
  let $C_j = X_j\Gamma  -\Gamma Y_j$.
  For $0<t$ sufficiently small, $Z(t)\in \cU(n+m)$, where
 \beq\label{eq:propcomp}
   Z_j(t) =\begin{pmatrix} X_j & t C_j \\ 0 & Y_j \end{pmatrix}.
 \eeq
  If $f(X)\Gamma = \Gamma f(Y),$ then, by Lemma
  \ref{lem:2x2},
  \[
     f_j(Z(t))  = \begin{pmatrix} f_j(X)
                        & t\big( f_j(X)\Gamma -\Gamma f_j(Y) \big)\\
                     0 & f_j(Y) \end{pmatrix} \\
       =  \begin{pmatrix} f_j(X) & 0 \\ 0 & f_j(Y) \end{pmatrix}.
 \]
  Thus, $f_j(Z(t))=f_j(Z(0)).$ In particular,
 \[
   f^{-1}\big(\{f(Z(0))\}\big) \supseteq \{Z(t): t\in \mathbb C\}\cap \cU.
 \]
  Since this set has, by assumption, compact closure in $\cU$,
 it follows that $C=0$; i.e.,
  $X\Gamma=\Gamma Y$.
\end{proof}

We are now ready to prove that
 a proper \nca is one-to-one and even a \fba if
 continuous and  mapping
between domains of the same dimension.

\begin{proof}[Proof of Theorem {\rm\ref{thm:oneone}}]
If $f$ is proper, then $f^{-1}(\{Z\})$ 
has compact closure in $\cU$
for every $Z\in\math$. Hence \eqref{it:1to1} is a consequence of
\eqref{it:1to1ugly}.

   For
  \eqref{it:1to1ugly}, invoke Proposition \ref{prop:oneone}
  with $\Gamma=\gamma I$ to conclude that $f$ is injective.
  Thus $f:\cU\to f(\cU)$ is a bijection from one non-commutative
 set to another.  Given $W,Z\in f(\cU)$ there exists
 $X,Y\in\cU$ such that $f(X)=W$ and $f(Y)=Z$. If
  moreover, $W\Gamma =\Gamma Z$, then $f(X) \Gamma =\Gamma f(Y)$
 and Proposition \ref{prop:oneone} implies $X\Gamma=\Gamma Y$; i.e.,
  $f^{-1}(W)\Gamma=\Gamma f^{-1}(Z)$. Hence $f^{-1}$ is itself
  a \ncap

 Let us now consider
 \eqref{it:xto1}.
  Using the continuity hypothesis and Proposition
  \ref{prop:continuous-analytic},
  for each $n$, the map $\fn:\cU(n)\to \cV(n)$
  is analytic. By hypothesis each $\fn$ is also proper and
  hence its  range
 is $\cV(n)$ by \cite[Theorem 15.1.5]{Rudin}.

  Now $\fn:\cU(n)\to \cV(n)$ is one-to-one, onto and analytic, so
  its inverse is analytic.
Further, by
  the already proved part of the
  theorem, $f^{-1}$ is an \cncap
\end{proof}

For both completeness and later use we record the following
 companion to Lemma \ref{lem:2x2}.

\begin{proposition}
 \label{prop:fprime}
  Let $\cU\subset\matg$ and $\cV\subset\math$ be  non-commutative
  domains.
  If $f:\cU\to\cV$ is a
  proper \cnca
 and if $X\in\cU(n)$, then
  $f^\prime(X):\matng\to \matnh$ is one-to-one.
  In particular, if $g=\h$, then $f^\prime(X)$
  is a vector space isomorphism.
\end{proposition}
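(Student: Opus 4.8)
The plan is to combine two results already in hand: a proper analytic free map is one-to-one (Theorem~\ref{thm:oneone}\eqref{it:1to1}, applicable since an analytic free map is continuous), and the upper-triangular evaluation formula for the derivative (Proposition~\ref{prop:continuous-analytic}(2)). Fix $X\in\cU(n)$, suppose $H\in\matng$ satisfies $f^\prime(X)[H]=0$, and aim to conclude $H=0$; linearity of $f^\prime(X)$ will then do the rest.

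First I would observe that for $H$ of sufficiently small norm the tuple $W$ with $j$-th entry $\begin{pmatrix} X_j & H_j \\ 0 & X_j \end{pmatrix}$ lies in $\cU(2n)$: indeed $\cU(2n)$ is open and contains $X\oplus X$ because $\cU$ is closed under direct sums. By Proposition~\ref{prop:continuous-analytic}(2) and the fact that $f$ respects direct sums,
\[
 f(W) = \begin{pmatrix} f(X) & f^\prime(X)[H] \\ 0 & f(X) \end{pmatrix}
      = \begin{pmatrix} f(X) & 0 \\ 0 & f(X) \end{pmatrix}
      = f(X\oplus X).
\]
Since $f$ is a proper analytic (hence continuous) free map, Theorem~\ref{thm:oneone}\eqref{it:1to1} gives that $f$ is injective, so $W = X\oplus X$; comparing $(1,2)$ blocks entrywise forces $H=0$. (Alternatively, one could feed $W$, $X\oplus X$ and $\Gamma=I$ into Proposition~\ref{prop:oneone}, whose compactness hypothesis is exactly properness, to reach the same point.) For an arbitrary $H\in\matng$ with $f^\prime(X)[H]=0$, pick $\eps>0$ small enough that $\eps H$ has small norm; then $f^\prime(X)[\eps H]=\eps f^\prime(X)[H]=0$, so $\eps H=0$ and hence $H=0$. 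Thus $f^\prime(X):\matng\to\matnh$ is one-to-one.

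For the final assertion, $f^\prime(X)$ is a linear map between the finite-dimensional complex vector spaces $\matng$ and $\matnh$, of dimensions $gn^2$ and $\h n^2$ respectively; injectivity already forces $\h\ge g$, and when $g=\h$ an injective linear endomorphism of a finite-dimensional vector space is automatically surjective, hence a vector space isomorphism.

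I do not expect a genuine obstacle here: the substantive content is entirely imported from Theorem~\ref{thm:oneone} and Proposition~\ref{prop:continuous-analytic}, and the only points requiring a word of care are that the enlarged tuple $W$ remains in $\cU(2n)$ (openness together with closure under direct sums) and the elementary rescaling that passes from a neighborhood of $0$ to all of $\matng$.
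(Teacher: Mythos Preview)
Your proof is correct and follows essentially the same route as the paper: scale $H$ so that the block upper-triangular tuple lies in $\cU(2n)$, apply Proposition~\ref{prop:continuous-analytic}(2) to identify the $(1,2)$ block of its image as $f'(X)[H]$, and then invoke the injectivity of $f$ from Theorem~\ref{thm:oneone} to force $H=0$. Your explicit treatment of the rescaling and the final dimension count for the $g=\h$ case just spells out details the paper leaves implicit.
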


\begin{proof}
  Suppose $f^\prime(X)[H]=0$.
 We scale $H$ so that $\begin{pmatrix} X & H \\ 0 & X\end{pmatrix} \in\cU$.
 From Proposition \ref{prop:continuous-analytic}
\[
  f\begin{pmatrix} X & H \\ 0 & X\end{pmatrix}
   = \begin{pmatrix} f(X) & f^\prime(X)[H] \\ 0 & f(X)\end{pmatrix}=
  \begin{pmatrix} f(X) & 0 \\ 0 & f(X)\end{pmatrix}
   = f\begin{pmatrix} X & 0 \\ 0 & X\end{pmatrix}.
\]
By the injectivity of $f$ established in Theorem \ref{thm:oneone},
$H=0$.
\end{proof}

\subsection{The Main Result is Sharp}
Key to the proof of Theorem \ref{thm:oneone} is
testing $f$ on the special class of matrices of the
form \eqref{eq:propcomp}. One naturally asks if
the hypotheses of the theorem in fact yield stronger
conclusions, say by plugging in richer classes of test matrices.
The answer to this question is no:
suppose $f$ is any \cnca
from $g$ to $g$ variables defined on a 
neighborhood $\cN_\eps$ of $0$ with $f(0)=0$ and $\fone'(0)$
invertible. 
Under mild additional assumptions (e.g.~the lowest 
eigenvalue of $f'(X)$ or the norm $\|f'(X)\|$
is bounded away from $0$ for $X\in\cN_\eps(n)$ independently of the
size $n$)
then there are non-commutative domains $\cU$ and $\cV$ with
$f:\cU\to\cV$ meeting the hypotheses of the theorem.

Indeed, consider (for fixed $n$)
the analytic function $\fn$ on $\cN_\eps(n)$.
Its derivative at $0$ is invertible; in fact,
$\fn'(0)$ is unitarily equivalent to
$I_n\otimes \fone'(0)$, cf.~Lemma \ref{lem:ampliate} below.
By the implicit function theorem,
there is a small $\delta$-neighborhood of $0$ on which
$\fn^{-1}$ is defined and analytic. By our assumptions and the bounds on the
size of this neighborhood given in \cite{Wan}, $\delta>0$
may be chosen to be independent of $n$.
This gives rise to a non-commutative
domain $\cV$ and the \cnca
$f^{-1}:\cV\to\cU$, where $\cU=f^{-1}(\cV)$.
Note $\cU$ is open (and hence a non-commutative domain)
since $f^{-1}(n)$ is analytic and one-to-one.
It is now clear
that $f:\cU\to\cV$ satisfies the hypotheses of Theorem \ref{thm:oneone}.

We just saw that absent more conditions on the non-commutative
domains $\cD$ and $\tilde \cD$, nothing beyond bianalytic free
can be concluded about $f$.
The authors, for reasons not gone into here, are particularly
interested in convex domains, the paradigm being
those given by what are called LMIs. These will be discussed
in Section \ref{sec:Examples}.
Whether or not convexity of the domain or range of an analytic
free  $f$
has a highly restrictive impact on
$f$ is a serious open question.

\section{Several Analogs to Classical Theorems}
\label{sec:analogs}
The conclusion of Theorem \ref{thm:oneone} is sufficiently
strong that most would say that it does not have a classical analog.
In this section \cnca analogs of classical
several complex variable theorems are obtained by combining
the corresponding classical theorem and Theorem
\ref{thm:oneone}. Indeed,
hypotheses for these analytic \nca results are weaker than their
classical analogs would suggest.

\subsection{A Free Caratheodory-Cartan-Kaup-Wu (CCKW) Theorem}
\label{sec:onto2}
 The commutative Caratheodory-Cartan-Kaup-Wu (CCKW) Theorem
 \cite[Theorem 11.3.1]{Krantz} says that
 if $f$ is an analytic self-map of a bounded domain
 in $\mathbb C^g$ which fixes
 a point $P$, then the eigenvalues of $f^\prime(P)$
 have modulus at most one. Conversely, if the eigenvalues
 all have  modulus one, then $f$ is in fact an automorphism;  and
 further if $f^\prime(P)=I$, then $f$ is the identity.
 The CCKW Theorem together with Corollary \ref{cor:bianalytic-free}
 yields Corollary \ref{cor:cckw1} below.
 We note
 that Theorem \ref{thm:oneone} can also be thought of
 as a non-commutative CCKW theorem in that
it concludes, like the CCKW Theorem does,  
 that a  map $f$ is bianalytic, 
 but under the (rather different) assumption that $f$ is proper.

\begin{corollary}
\label{cor:cckw1}
 Let $\cD$ be a given bounded
non-commutative domain
 which contains $0$.
 Suppose
 $f:\cD \to \cD$ is
 an \cncap
   Let $\phi$ denote the
 mapping $\fone:\cD(1)\to\cD(1)$ and assume $\phi(0)=0.$
\begin{enumerate}[\rm (1)]
 \item  If  all the eigenvalues of
 $\phi^\prime(0)$ have modulus one,
   then $f$ is a  bianalytic free map; and
 \item  if $\phi^\prime(0)=I$, then $f$ is the identity.
\end{enumerate}
\end{corollary}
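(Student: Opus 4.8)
The plan is to reduce the free statement to the classical CCKW Theorem applied levelwise, using Theorem \ref{thm:oneone} and Corollary \ref{cor:bianalytic-free} to upgrade a levelwise conclusion to a conclusion about the free map $f$. First I would observe that since $\cD$ is a bounded non-commutative domain, each $\cD(n)$ is a bounded domain in $M_n(\C)^g \cong \C^{gn^2}$, and each $\fn:\cD(n)\to\cD(n)$ is an analytic self-map (analyticity coming from Proposition \ref{prop:continuous-analytic}, since an analytic free map is in particular continuous). The key linear-algebra input is the relation between $\fn'(0)$ and $\phi'(0)=\fone'(0)$: because $f$ respects direct sums (Proposition \ref{prop:nc-map-alt}) and $0\in M_n(\C)^g$ is the $n$-fold direct sum of $0\in M_1(\C)^g$, the ampliation lemma (Lemma \ref{lem:ampliate}, referenced in the ``sharpness'' discussion) gives that $\fn'(0)$ is unitarily equivalent to $I_n\otimes \fone'(0)$. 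Consequently the eigenvalues of $\fn'(0)$ are exactly the eigenvalues of $\phi'(0)$ (each repeated $n$ times), and $\fn'(0)=I$ if and only if $\phi'(0)=I$.

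For part (1): assume all eigenvalues of $\phi'(0)$ have modulus one. By the previous paragraph, for every $n$ all eigenvalues of $\fn'(0)$ have modulus one, so the converse direction of the classical CCKW Theorem \cite[Theorem 11.3.1]{Krantz} applies to the analytic self-map $\fn$ of the bounded domain $\cD(n)$ fixing $0$, yielding that each $\fn$ is an automorphism of $\cD(n)$, i.e. bianalytic. Then Corollary \ref{cor:bianalytic-free} immediately gives that $f$ is a bianalytic free map. For part (2): if $\phi'(0)=I$, then $\fn'(0)=I$ for every $n$, and the final clause of the classical CCKW Theorem forces $\fn=\mathrm{id}$ on $\cD(n)$ for every $n$; hence $f$ is the identity free map.

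I expect the only real subtlety to be the ampliation identity $\fn'(0)\cong I_n\otimes\fone'(0)$ — one must check that respecting direct sums really does propagate to the derivative and identify the unitary conjugacy correctly — but this is exactly the content of Lemma \ref{lem:ampliate} cited above, so in the write-up it can simply be invoked. Everything else is a direct citation of the classical CCKW Theorem at each matrix level followed by Corollary \ref{cor:bianalytic-free}; no genuinely new estimate is needed, which is the point of the remark that these free analogs have hypotheses weaker than their classical counterparts would suggest.
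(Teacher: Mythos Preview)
Your proposal is correct and follows essentially the same route as the paper: invoke Lemma~\ref{lem:ampliate} to identify $\fn'(0)$ with $I_n\otimes\phi'(0)$, apply the classical CCKW Theorem levelwise, and then use Corollary~\ref{cor:bianalytic-free} for part~(1). The only cosmetic point is that analyticity of each $\fn$ is part of the hypothesis (``analytic free map'') rather than something to be deduced from Proposition~\ref{prop:continuous-analytic}.
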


 The proof uses the following lemma,  whose proof is trivial if it is
 assumed that $f$ is continuous (and hence analytic) and then
one works with the formal power series representation for a 
free analytic function.

\begin{lemma}
 \label{lem:ampliate}
 Keep the notation and hypothesis of Corollary {\rm\ref{cor:cckw1}}.
 If $n$ is a positive integer and  $\Phi$
 denotes the mapping $\fn:\cD(n)\to\cD(n)$, then
 $\Phi^\prime(0)$ is unitarily equivalent to
 $I_n\otimes \phi^\prime(0)$.
\end{lemma}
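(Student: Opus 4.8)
The plan is to compute $\Phi'(0)$ directly from the free-map axioms, using the identification $\cD(n) \subset M_n(\C) \otimes \C^g$ and the fact that $f$ respects intertwining maps (equivalently, direct sums and similarity). The key point is that although $f$ is a priori only a free map (not assumed continuous or analytic), the hypotheses of Corollary \ref{cor:cckw1} include that $f$ is an analytic free map, so each $\fn$ is genuinely analytic and $\Phi'(0)$ exists as a $\C$-linear map $\matng \to \matng$. Since $\phi = \fone$ fixes $0$ and $f$ respects direct sums, $\fn$ fixes $0 \in \cD(n)$ as well, so $\Phi'(0)$ is the relevant linearization.

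First I would exploit direct sums: for $X \in \cD(n)$ of small norm and $Y \in \cD(m)$ of small norm, $f(X \oplus Y) = f(X) \oplus f(Y)$, and differentiating in the $X$-slot at $0$ gives that $\Phi'(0)$ for $\cD(n+m)$ restricted to the upper-left block acts as $\Phi'(0)$ for $\cD(n)$. Combined with unitary similarity invariance (conjugating by a permutation-type unitary that moves blocks around), this shows $\Phi'(0)$ on $\cD(n)$ is the "$n$-fold ampliation" of $\fone'(0)$ in the sense that it is determined entirely by its behavior on block-diagonal and, via similarity, on intertwined tuples. The cleanest route is Lemma \ref{lem:2x2} (or Proposition \ref{prop:continuous-analytic}(2)): for $X = 0 \in \cD(1)$ and a direction, the $2\times 2$ upper-triangular trick computes directional derivatives of $\ftwo$ at $0$ in terms of $\phi'(0)$; iterating the block structure to $n \times n$ upper-triangular tuples computes all entries of $\Phi'(0)$.

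Concretely, I would argue as follows. Fix $H = (H_1,\dots,H_g) \in \matng$. Write $E_{ij}$ for matrix units and decompose $H_k = \sum_{i,j} h^{(k)}_{ij} E_{ij}$. Using similarity-respecting with intertwiners $\Gamma$ of the form $E_{ij}$ (which intertwine the rank-one tuples $\lambda E_{ii}$ and $\lambda E_{jj}$ appropriately), and the $2 \times 2$ formula from Lemma \ref{lem:2x2} applied blockwise, one shows that $\Phi'(0)[H]$ has $(i,j)$ matrix-block entry depending only on $(h^{(1)}_{ij},\dots,h^{(g)}_{ij})$ and given by applying $\phi'(0)$ to that $g$-tuple of scalars. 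That is, if we view $\matng = M_n(\C) \otimes \C^g$ and $\phi'(0) \in M_g(\C)$ acting on the $\C^g$ factor, then $\Phi'(0) = I_n \otimes \phi'(0)$ up to the reordering of tensor factors, i.e.\ unitarily equivalent to $I_n \otimes \phi'(0)$.

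The main obstacle I anticipate is bookkeeping rather than conceptual: carefully justifying that differentiating the intertwining/direct-sum identities at $0$ is legitimate (which is immediate once analyticity is granted) and then correctly tracking the tensor-factor ordering so that the stated "$I_n \otimes \phi'(0)$" — as opposed to $\phi'(0) \otimes I_n$ or some shuffle — comes out, which is exactly why the lemma is stated only up to unitary equivalence. A secondary subtlety is that one must use the $2 \times 2$ upper-triangular evaluation (Proposition \ref{prop:continuous-analytic}(2), which identifies $f'$ via the nilpotent corner) to pin down the off-diagonal blocks of $\Phi'(0)[H]$, since the direct-sum axiom alone only controls the diagonal blocks; the similarity axiom with $\Gamma = E_{ij}$ then links the off-diagonal behavior back to $\phi'(0)$. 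None of this requires power series, which is the point of the remark preceding the lemma.
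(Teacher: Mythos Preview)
Your proposal is correct and takes essentially the same approach as the paper: both exploit the free-map axioms on matrix-unit tuples, invoke the $2\times 2$ upper-triangular device (Lemma~\ref{lem:2x2}/Proposition~\ref{prop:continuous-analytic}(2)) to access directional derivatives, and then use linearity to conclude $\Phi'(0)[E_{k,\ell}\otimes h]=E_{k,\ell}\otimes\phi'(0)[h]$. The paper packages this slightly more compactly by evaluating $\Phi$ directly at the rank-one tuples $(E_{k,k}+E_{k,\ell})\otimes zh$ (which the intertwining property sends to $(E_{k,k}+E_{k,\ell})\otimes\phi(zh)$) and differentiating in $z$, then subtracting the diagonal case---but the substance is the same as what you outline.
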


\begin{proof}
 Let $E_{i,j}$ denote the matrix units for
 $M_n(\C)$.  Fix $h\in\mathbb C^g$. Arguing as in the proof of
 Proposition \ref{prop:fprime} gives, for $k\ne \ell$
 and $z\in \mathbb C$ of small modulus,
\[
  \Phi((E_{k,k}+E_{k,\ell})\otimes z h)
   = (E_{k,k} +E_{k,\ell})\otimes \phi(zh).
\]
 It follows that
\[
  \Phi^\prime(0)[(E_{k,k}+E_{k,\ell})\otimes h]
    = (E_{k,k}+E_{k,\ell})\phi^\prime(0)[h].
\]
 On the other hand,
\[
  \Phi^\prime(0)[E_{k,k}\otimes h] = E_{k,k}\otimes \phi^\prime(0)[h].
\]
 By linearity of $\Phi^\prime(0)$, it follows that
\[
 \Phi^\prime(0)[E_{k,\ell}\otimes h] = E_{k,\ell}\otimes \phi^\prime(0)[h].
\]
 Thus, $\Phi^\prime(0)$ is  unitarily equivalent to
 $I_{n}\otimes \phi^\prime(0).$
\end{proof}

\begin{proof}[Proof of Corollary {\rm\ref{cor:cckw1}}]
 The hypothesis  that $\phi^\prime(0)$ has eigenvalues
 of modulus one, implies, by Lemma \ref{lem:ampliate},
 that, for each $n$, the eigenvalues
 of $\fn^\prime(0)$ all have modulus one. Thus,
 by the CCKW Theorem, each $\fn$ is an automorphism.
 Now Corollary \ref{cor:bianalytic-free} implies $f$
 is a \fbap

 Similarly, if $\phi^\prime(0)=I_g$, then $\fn^\prime(0)=I_{ng}$
 for each $n$. Hence,  by the CCKW Theorem, $\fn$
 is the identity for every $n$ and
 therefore $f$ is itself the identity.
\end{proof}

Note a classical bianalytic function $f$ is
completely determined by its value and differential
at a point
 (cf.~a remark after Theorem 11.3.1 in \cite{Krantz}).
Much the same is true for \cncas and for the same reason.

\begin{proposition}
 \label{prop:derivative0}
   Suppose $\cU,\cV\subset \matg$ are non-commutative domains, $\cU$
   is bounded, both contain $0,$
 and $f,g:\cU\to \cV$ are proper \cncasp
 If $f(0)=g(0)$ and $f^\prime(0)=g^\prime(0)$, then $f=g$.
\end{proposition}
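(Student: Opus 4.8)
The plan is to reduce the statement to the free Cartan uniqueness result already recorded as Corollary~\ref{cor:cckw1}(2). First I would observe that $f$ and $g$ are proper analytic (hence continuous) free maps between non-commutative domains that both live in $\matg$, so the hypothesis $g=\h$ of Theorem~\ref{thm:oneone}\eqref{it:xto1} is met and each of $f,g$ is bianalytic. In particular $g$ is a bijection of $\cU$ onto $\cV$ whose inverse $g^{-1}\colon\cV\to\cU$ is again an analytic free map. (That each $g[n]$ is onto $\cV(n)$, which is what makes $g^{-1}$ defined on all of $\cV$, is exactly the Rudin surjectivity ingredient used in the proof of Theorem~\ref{thm:oneone}.)

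Next I would form the composition $h:=g^{-1}\circ f\colon\cU\to\cU$. A composition of analytic free maps is an analytic free map: each $h[n]$ is a composition of analytic functions between open subsets of $\matng$, and respecting direct sums and respecting similarity both pass to compositions, so $h$ is a free map by Proposition~\ref{prop:nc-map-alt}. Since $f(0)=g(0)$ we get $h(0)=g^{-1}(f(0))=g^{-1}(g(0))=0$. Differentiating the first-level map at $0$ and using the chain rule together with the inverse function theorem,
\[
  h[1]'(0)=\big(g[1]'(0)\big)^{-1} f[1]'(0)=I,
\]
where we used $f[1]'(0)=g[1]'(0)$ and the invertibility of $g[1]'(0)$ from Proposition~\ref{prop:fprime}. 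Thus $h$ is an analytic free self-map of the bounded non-commutative domain $\cU\ni 0$ with $h[1](0)=0$ and $h[1]'(0)=I$, so Corollary~\ref{cor:cckw1}(2) forces $h$ to be the identity on $\cU$. Hence $f=g\circ h=g$, which is the claim.

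I do not expect a genuine obstacle. The two points that deserve a sentence of justification are that $g^{-1}\circ f$ is again analytic and free (routine, via Proposition~\ref{prop:nc-map-alt} and the classical chain rule) and that $g^{-1}$ really is defined on all of $\cV$ (the surjectivity half of Theorem~\ref{thm:oneone}\eqref{it:xto1}, itself borrowed from Rudin). As an alternative to invoking Corollary~\ref{cor:cckw1}, one could instead apply the classical Cartan uniqueness theorem directly to each $h[n]\colon\cU(n)\to\cU(n)$, after using Lemma~\ref{lem:ampliate} to see that $h[n]'(0)=I_n\otimes h[1]'(0)=I_{ng}$; but routing through Corollary~\ref{cor:cckw1} is cleaner.
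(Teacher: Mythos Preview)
Your proof is correct and follows essentially the same route as the paper's: invoke Theorem~\ref{thm:oneone}\eqref{it:xto1} to get bianalyticity, form the self-map of $\cU$ by composing one map with the inverse of the other, and apply Corollary~\ref{cor:cckw1}(2). The paper writes the composition as $f\circ g^{-1}$ rather than your $g^{-1}\circ f$, but your order is the one that lands in the bounded domain $\cU$ where Corollary~\ref{cor:cckw1} applies directly; otherwise the arguments are identical.
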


\begin{proof}
 By Theorem \ref{thm:oneone} both $f$ and $g$ are
 \fbasp
Thus $h=f\circ g^{-1}:\cU\to \cU$
 is a \fba fixing $0$ with $h[1]^\prime(0)=I$.
 Thus, by Corollary \ref{cor:cckw1}, $h$ is the identity.
 Consequently $f=g$.
\end{proof}

\subsection{Circular Domains}
  A subset $S$ of a complex vector space is
  {\bf circular} if $\exp(it) s\in S$  whenever
  $s\in S$ and $t\in\R$.
  A non-commutative domain $\cU$ is circular
  if each $\cU(n)$ is circular. \index{circular domain}

 Compare the following theorem  to
 its commutative counterpart \cite[Theorem  11.1.2]{Krantz}
 where the domains $\cU$ and $\cV$ are the same.

\begin{theorem}
\label{thm:circLin}
  Let $\cU$ and $\cV$ be bounded non-commutative domains
  in $\matg$ and $\math$, respectively, both
  of which contain $0$.
  Suppose $f:\cU\to \cV$ is a
  proper \cnca
  with $f(0)=0$.
  If $\cU$ and  the range
 $\cR:= f(\cU)$ of $f$
 are circular,  then $f$ is linear.
\end{theorem}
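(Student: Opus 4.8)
The plan is to mimic the classical Cartan-type argument that a bianalytic self-map of a circular domain fixing the origin with identity derivative is linear, but applied to an auxiliary self-map obtained by conjugating the rotations on $\cU$ through $f$. First I would invoke Theorem~\ref{thm:oneone}\eqref{it:xto1}: since $f$ is proper, continuous and $g=\h$, it is a bianalytic free map, so $f^{-1}:\cV\to\cU$ exists and is an \cncap Next, for $\theta\in\R$ let $r_\theta$ denote the rotation $X\mapsto e^{i\theta}X$, which maps $\cU$ into $\cU$ (circularity of $\cU$) and $\cV$ into $\cV$ (circularity of $\cR=\cV$; note $f(\cU)=\cV$ by properness, so the range being circular means $\cV$ is circular). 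Define
\[
  g_\theta := f^{-1}\circ r_{-\theta}\circ f \circ r_\theta : \cU\to\cU.
\]
Each $g_\theta$ is a composition of \cncas and bianalytic free maps, hence a bianalytic free map $\cU\to\cU$, and $g_\theta(0)=0$. A direct chain-rule computation at $0$ gives $g_\theta'(0)=f'(0)^{-1}\,(e^{-i\theta}I)\,f'(0)\,(e^{i\theta}I)=I$ on every level. By Proposition~\ref{prop:derivative0} (or directly Corollary~\ref{cor:cckw1}(2)), a bianalytic free self-map of the bounded domain $\cU$ fixing $0$ with derivative $I$ at $0$ is the identity. Hence $g_\theta=\mathrm{id}$ for every $\theta$, i.e.
\[
  f(e^{i\theta}X) = e^{i\theta} f(X) \qquad\text{for all } X\in\cU(n),\ \theta\in\R.
\]

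\textbf{From homogeneity to linearity.} Once the functional equation $f(e^{i\theta}X)=e^{i\theta}f(X)$ is in hand, I would conclude as follows. Fix $n$ and $X\in\cU(n)$; the scalar function $z\mapsto f(zX)$ is analytic on a neighborhood of the closed unit disk's worth of scalars for which $zX\in\cU(n)$ (in particular on a disk about $0$, since $\cU(n)$ is open and contains $0$), and it satisfies $f(e^{i\theta}(zX))=e^{i\theta}f(zX)$. Writing the power series $f(zX)=\sum_{k\ge 0} z^k a_k(X)$ with $a_k(X)\in\matnh$, the homogeneity forces $e^{i\theta}\sum z^k a_k = \sum (e^{i\theta}z)^k a_k$, so $a_k(X)=0$ for all $k\ne 1$. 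Thus $f(zX)=z\,f'(0)[X]$ for scalars $z$, and in particular $f(X)=f'(0)[X]$, so $f$ agrees with the linear map $f'(0)$ on all of $\cU$. Since $\cU(n)$ is a domain and contains a neighborhood of $0$, this identity, first proved near $0$, propagates to all of $\cU(n)$ either by the same scaling argument applied along rays that remain in $\cU(n)$ or by analytic continuation; either way $f=f'(0)$ is linear.

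\textbf{Main obstacle.} The routine points — that compositions of free maps are free maps, that the chain rule computes $g_\theta'(0)$, and the power-series extraction of homogeneity — are all standard here given Proposition~\ref{prop:continuous-analytic} and the composition stability implicit in the definitions. The one genuine subtlety I would be careful about is the \emph{well-definedness of the composition} $g_\theta = f^{-1}\circ r_{-\theta}\circ f\circ r_\theta$ as a map $\cU\to\cU$: one needs $r_\theta(\cU)\subseteq\cU$ (clear from circularity), then $f(r_\theta(\cU))\subseteq\cV$, then $r_{-\theta}$ of that back inside $\cV$ (this is exactly where circularity of the range $\cR=f(\cU)$ is used, and why the hypothesis is stated on $\cR$ rather than on $\cV$ a priori — though properness identifies the two), and finally $f^{-1}$ of that landing in $\cU$. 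Equivalently, and perhaps more cleanly, I would first establish $f(\cU)=\cV=\cR$ from properness and continuity via Theorem~\ref{thm:oneone}\eqref{it:xto1}, observe $\cV$ is therefore circular, and only then form $g_\theta$; this keeps every arrow in the composition manifestly within the intended domain. The rest is bookkeeping.
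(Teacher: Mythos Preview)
Your overall strategy---conjugate rotations through $f$ to obtain a self-map of $\cU$ fixing $0$ with derivative $I$, then invoke Corollary~\ref{cor:cckw1}(2)---is exactly the paper's. The gap is that you tacitly assume $g=\tilde g$. The theorem is stated for $\cU\subset M(\mathbb C)^g$ and $\cV\subset M(\mathbb C)^{\tilde g}$ with no relation between $g$ and $\tilde g$, and the paper explicitly comments afterward on the case $g\neq\tilde g$. Your invocation of Theorem~\ref{thm:oneone}\eqref{it:xto1} (``since \ldots\ $g=\tilde g$, it is a bianalytic free map''), the identification $\cR=\cV$, and the chain-rule step $g_\theta'(0)=f'(0)^{-1}(e^{-i\theta}I)f'(0)(e^{i\theta}I)$ all use that $f'(0)$ is invertible, which need not hold.

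The paper repairs each of these points. Instead of Theorem~\ref{thm:oneone}\eqref{it:xto1} it uses part~\eqref{it:1to1}: $f$ is injective and $f^{-1}:\cR\to\cU$ is a free map (defined only on the range $\cR$, not on all of $\cV$). To know that the auxiliary self-map $F=f^{-1}(e^{-i\theta}f(e^{i\theta}x))$ is \emph{analytic} (needed to apply Corollary~\ref{cor:cckw1}), the paper first shows $f^{-1}$ is continuous: Proposition~\ref{prop:fprime} gives that $f'(X)$ is injective at every $X$, so each $f[n]$ is an immersion and hence an embedding onto its image, making $f^{-1}[n]$ continuous. Finally, in place of $f'(0)^{-1}$, the paper differentiates the relation $e^{i\theta}f(F(x))=f(e^{i\theta}x)$ at $0$ to get $f'(0)\circ F'(0)=f'(0)$ and then uses \emph{injectivity} of $f'(0)$ (again Proposition~\ref{prop:fprime}) to conclude $F'(0)=I$. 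With these adjustments your argument goes through verbatim; the homogeneity-to-linearity step is fine.
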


 The domain $\cU=(\cU(n))$ is {\bf convex}
if each $\cU(n)$ is a convex set.

\begin{corollary}
  Let $\cU$ and $\cV$ be bounded non-commutative domains
  in $\matg$  both
  of which contain $0$.
  Suppose $f:\cU\to \cV$ is a
  proper \cnca  with $f(0)=0$.
  If both $\cU$ and $\cV$ are circular and if one is convex,
  then so is the other.
\end{corollary}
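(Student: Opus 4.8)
The plan is to reduce everything to Theorem~\ref{thm:circLin} together with the triviality that a linear bijection and its inverse carry convex sets to convex sets. First I would record the structural facts coming from the earlier results. Since $f\colon\cU\to\cV$ is a proper \cnca and $g=\h$, Theorem~\ref{thm:oneone}\eqref{it:xto1} shows $f$ is bianalytic; in particular each $\fn\colon\cU(n)\to\cV(n)$ is a bijection, the range satisfies $f(\cU)=\cV$ (each $\fn$ is onto, as in the proof of Theorem~\ref{thm:oneone}\eqref{it:xto1}), and $f^{-1}\colon\cV\to\cU$ is itself a proper \cncap Moreover $f^{-1}(0)=0$ because $f(0)=0$ and $f$ is one-to-one.

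Next I would apply Theorem~\ref{thm:circLin} twice. Applied to $f$: the domains $\cU,\cV$ are bounded and contain $0$, $f(0)=0$, and the range $\cR:=f(\cU)=\cV$ is circular, as is $\cU$, by hypothesis; hence $f$ is linear. Applied to $f^{-1}\colon\cV\to\cU$: again $\cV,\cU$ are bounded and contain $0$, $f^{-1}$ is a proper \cnca with $f^{-1}(0)=0$, and its range $\cU$ is circular, as is $\cV$; hence $f^{-1}$ is linear as well. Thus for each $n$ the map $\fn$ is a linear bijection of $\matng$ onto $\matnh$ whose inverse $(f^{-1})[n]$ is also linear.

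Finally, a linear bijection and its inverse map convex sets to convex sets, so if $\cU$ is convex then $\cV(n)=\fn(\cU(n))$ is convex for every $n$, whence $\cV$ is convex; and if $\cV$ is convex then $\cU(n)=(f^{-1})[n](\cV(n))$ is convex for every $n$, whence $\cU$ is convex. Either way, if one of $\cU,\cV$ is convex, so is the other, which is the assertion.

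I do not expect a genuine obstacle: all the substance sits in Theorem~\ref{thm:circLin}. The only point needing care is verifying that $f^{-1}$ satisfies the full hypothesis list for the second application of that theorem — that it is a \emph{proper} analytic free map fixing the origin with both its domain and its range circular — and this is exactly what Theorem~\ref{thm:oneone} (bianalyticity of $f$, so $f^{-1}$ is a proper \cncacomman and $f(\cU)=\cV$) supplies, given the standing circularity and boundedness assumptions.
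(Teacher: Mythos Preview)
Your argument is correct and matches the paper's own one-line justification, which simply invokes Theorem~\ref{thm:circLin} together with the surjectivity $f(\cU)=\cV$ from Theorem~\ref{thm:oneone}\eqref{it:xto1}. The second application of Theorem~\ref{thm:circLin} to $f^{-1}$ is harmless but unnecessary: once $f$ is linear and bijective, its inverse is automatically linear.
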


This corollary is an immediate
consequence of Theorem \ref{thm:circLin} and the fact
(see Theorem \ref{thm:oneone}\eqref{it:xto1}) that
$f$ is onto $\cV$. 

 We admit the hypothesis that the range $\cR= f(\cU)$ of $f$
 in Theorem \ref{thm:circLin}
 is circular seems pretty contrived when the domains
 $\cU$ and $\cV$ have a different number of variables.
 On the other hand if they  have the same number of variables
 it is the same as $\cV$ being circular since by
 Theorem \ref{thm:oneone}, $f$ is onto.

\begin{proof}[Proof of Theorem {\rm\ref{thm:circLin}}]
 Because $f$ is a proper \nca it is injective
 and its inverse (defined on $\cR$) is a \nca
 by Theorem \ref{thm:oneone}.  Moreover, using the
 analyticity of $f$,  its derivative
 is pointwise injective by Proposition \ref{prop:fprime}.
 It follows that each $\fn:\cU(n)\to \matnh$
 is an embedding \cite[p.~17]{GP}.
 Thus, each $\fn$  is a homeomorphism onto its range and
 its  inverse $\fn^{-1}=f^{-1}[n]$ is continuous.

 Define $F: \cU \to \cU$ by
\beq\label{eq:defF}
  F(x):=  f^{-1}\big( e^{- i\theta} f(\e x) \big)
\eeq
 This function respects direct sums and similarities,
 since it is the composition of maps which do.
  Moreover, it is continuous by the discussion above.
 Thus $F$ is an \cncap

 Using the relation $\e f(F(x))= f(\e)$
 we find $\e f^\prime(F(0))F^\prime(0)=f^\prime(0)$.
 Since $f^\prime(0)$ is injective, $\e F^\prime(0)=I.$
 It follows from Corollary \ref{cor:cckw1}(2)
 that $F(x)=\e x$ and thus,
 by \eqref{eq:defF},
 $f(\e x)=\e f(x)$. Since this holds for every
  $\theta$, it follows that $f$ is linear.
\end{proof}

If $f$ is not assumed to map $0$ to $0$ (but instead fixes
some other point), then a
proper self-map 
need not be linear.
This follows from the  example  we discuss in
Section \ref{sec:exYes}.

\begin{remark}\rm
 A consequence of the Kaup-Upmeier series of papers
 \cite{BKU,KU} shows that
 given
 two bianalytically equivalent bounded circular 
 domains in $\C^g$, there
 is a \emph{linear} bianalytic map between them.
 We believe this result   extends to the present non-commutative setting.g
\end{remark}

\section{Maps in One Variable, Examples}
 \label{sec:Examples}
  This section contains two examples.  The first shows that
  the circled hypothesis is needed in Theorem \ref{thm:circLin}.
Our second example concerns $\cD$,
a non-commutative domain in one variable
  containing the origin, and $b:\cD\to \cD$ a proper
  \cnca with $b(0)=0$. It follows that $b$ is bianalytic
  and hence $b[1]^\prime(0)$
  has modulus one. Our second example shows that this setting can force
  further restrictions on $b[1]^\prime(0)$.
  The non-commutative domains of both examples are LMI domains; i.e.,
  they are the non-commutative  solution set of a linear
  matrix inequality (LMI).  Such domains are convex, and play a major
  role in the important area of semidefinite programming; see \cite{WSV}
or the excellent survey \cite{Nem}.

\subsection{LMI Domains}
  A special case of the non-commutative domains
  are those described by a linear matrix inequality. 
  Given a positive integer $d$
  and  $A_1,\dots,A_g \in M_d(\C),$ the
  linear matrix-valued polynomial
\[
  L(x)=\sum A_j x_j\in M_d(\C)\otimes \bCx
\]
  is a {\bf truly linear pencil}. \index{truly linear pencil}
  Its adjoint is, by definition,
$
  L(x)^*=\sum A_j^* x_j^*.
$
  Let
\[
  \cL(x) = I_d + L(x) +L(x)^*.
\]
  If $X\in \matng$, then $\cL(X)$ is defined by the canonical substitution,
\[
  \cL(X) = I_d\otimes I_n
          +\sum A_j\otimes X_j +\sum A_j^* \otimes X_j^*,
\]
 and yields a symmetric $dn\times dn$ matrix.
  The inequality $\cL(X)\succ 0$ for tuples $X\in\matg$
  is a {\bf linear matrix inequality (LMI)}. \index{LMI}
  \index{linear matrix inequality} The sequence of solution sets
 $\cD_{\cL}$ defined by
\[
 \cD_{\cL}(n) = \{X\in\matng : \cL(X)\succ 0\}
\]
 is a non-commutative domain which contains a neighborhood
 of $0$. It is called a {\bf non-commutative (NC) LMI domain}.

\subsection{A Concrete Example of a Nonlinear Bianalytic Self-map
on an NC LMI Domain}
\label{sec:exYes}
It is surprisingly difficulty to find proper self-maps on LMI domains
which are not linear. This section contains the only such example,
up to trivial modification, of which
we are aware.   Of course, by Theorem \ref{thm:circLin} the underlying
domain cannot be circular.

In this example the domain is a one-variable LMI domain.
Let $$A=\begin{pmatrix} 1&1\\ 0&0\end{pmatrix}$$
and let $\cL$ denote the univariate $2\times 2$ linear pencil,
$$\cL(x):= I + Ax + A^* x^*
=
\begin{pmatrix} 1 + x +x^* & x \\
                        x^* & 1 \end{pmatrix}.
$$
Then 
$$\cD_\cL=\{X\mid \| X-1 \| < \sqrt 2\}.$$
To see this note $\cL(X) \succ 0$
if and only if $1+X+X^*-XX^*\succ0$,
which is in turn equivalent to $(1-X)(1-X)^*\prec 2$.

\begin{proposition}\label{prop:ex}
For real $\theta$, consider
$$
f_\theta (x):= \frac{ e^{i \theta} x}{1+x- e^{i \theta} x}.
$$
\ben[\rm (1)]
\item
$f_\theta:\cD_\cL\to\cD_\cL$ is a proper
\cncacomman
$f_\theta(0)=0$, and $f^\prime_\theta(0)=\exp(i \theta)$.
\item
Every proper \cnca  $f:\cD_\cL\to\cD_\cL$ fixing the origin
equals one of the $f_\theta$.
\een
\end{proposition}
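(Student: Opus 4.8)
The plan is to treat the two parts separately, with part (1) being essentially a computation and part (2) following from the rigidity already developed. For part (1), I would first verify that $f_\theta$ is an analytic free map: since $f_\theta(x) = e^{i\theta}x(1+x-e^{i\theta}x)^{-1}$ is a non-commutative rational expression with $f_\theta(0)=0$, it is analytic (hence a free map by Proposition \ref{prop:continuous-analytic}) wherever the inverse makes sense, and one checks $1+X-e^{i\theta}X$ is invertible on $\cD_\cL$. The derivative at $0$ is read off from the linear term: $f_\theta(x) = e^{i\theta}x + O(x^2)$, so $f_\theta'(0)=e^{i\theta}$. The substantive point is that $f_\theta$ maps $\cD_\cL$ into $\cD_\cL$ properly. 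Using the description $\cD_\cL=\{X : (1-X)(1-X)^*\prec 2\}$, I would compute $1-f_\theta(X)$ explicitly: writing $W = 1+X-e^{i\theta}X$, one gets $1-f_\theta(X) = W^{-1}(1+X-e^{i\theta}X - e^{i\theta}X)$... more carefully, $1 - f_\theta(X) = (1+X-e^{i\theta}X - e^{i\theta}X)W^{-1}$; I would simplify to show $(1-f_\theta(X))(1-f_\theta(X))^* \prec 2$ is equivalent to $(1-X)(1-X)^*\prec 2$, likely via a Möbius-type identity exhibiting $f_\theta$ as (conjugate to) an isometry-preserving transformation of the relevant quadratic form. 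Properness then follows because as $X\to\partial\cD_\cL(n)$ the quantity $(1-X)(1-X)^*$ approaches the boundary $2I$, forcing $(1-f_\theta(X))(1-f_\theta(X))^*$ to do the same.

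For part (2), let $f:\cD_\cL\to\cD_\cL$ be a proper analytic free map with $f(0)=0$. By Theorem \ref{thm:oneone}\eqref{it:xto1}, $f$ is bianalytic, and by Corollary \ref{cor:cckw1} (applied with the bounded circular... no — $\cD_\cL$ is not circular, so instead) by the CCKW discussion, $|f[1]'(0)|=1$ since $f[1]$ is a bianalytic self-map of the disk $\cD_\cL(1)=\{z: |z-1|<\sqrt2\}$ fixing $0$. Write $e^{i\theta} := f[1]'(0)$. Now consider $g := f_{-\theta}\circ f : \cD_\cL\to\cD_\cL$; it is a proper analytic free self-map fixing $0$ with $g[1]'(0) = e^{-i\theta}e^{i\theta}=1$. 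By Corollary \ref{cor:cckw1}(2), $g$ is the identity, hence $f = f_\theta$.

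I expect the main obstacle to be the computation in part (1) showing $f_\theta(\cD_\cL)\subseteq\cD_\cL$ with properness — specifically producing the clean algebraic identity relating $(1-f_\theta(X))(1-f_\theta(X))^*$ to $(1-X)(1-X)^*$ in the non-commutative setting, where one must be careful that $X$ and $X^*$ do not commute and that $W=1+X-e^{i\theta}X$ need not be normal. A convenient route may be the substitution $Y = 1-X$, rewriting $f_\theta$ in terms of $Y$ so that the self-map property becomes the statement that a certain linear-fractional map preserves $\{Y : YY^*\prec 2\}$; alternatively, one can verify the inclusion on $1\times 1$ matrices and then note that the identity, being a polynomial identity in $X,X^*$ after clearing denominators, persists at all matrix levels. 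A secondary point to check in part (2) is that $f[1]$ genuinely maps the disk $\{|z-1|<\sqrt2\}$ to itself properly (so that $|f[1]'(0)|=1$ is legitimate), but this is immediate from properness of $f$ at level $n=1$ together with the Schwarz–Pick lemma for the disk.
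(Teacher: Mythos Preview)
Your proposal is correct and follows essentially the same route as the paper. For (1) the paper carries out precisely the computation you outline: writing $1-f_\theta(X)=(1+X-2e^{i\theta}X)(1+X-e^{i\theta}X)^{-1}$ and expanding, one finds $(1-f_\theta(X))(1-f_\theta(X))^*\prec 2$ is equivalent to $1+X+X^*-XX^*\succ 0$, i.e.\ $(1-X)(1-X)^*\prec 2$; since this is a one-variable problem, $W=1+(1-e^{i\theta})X$ commutes with $X$, so the non-commutativity worries you flag do not arise. For (2) the paper simply invokes the uniqueness of a bianalytic map with prescribed value and derivative at $0$ (Proposition~\ref{prop:derivative0}), whose proof is exactly your composition-with-$f_{-\theta}$ argument via Corollary~\ref{cor:cckw1}(2); note that Corollary~\ref{cor:cckw1} does not require circularity, so your momentary hesitation there is unnecessary.
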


\begin{proof}
Item (1) follows from a straightforward computation:
\begin{gather*}
(1-f_\theta(X))(1-f_\theta(X))^* \prec 2 \iff
 \left(1-\frac{ e^{i \theta} X}{1+X- e^{i \theta} X}\right)
\left(1- \frac{ e^{i \theta} X}{1+X- e^{i \theta} X}\right)^* \prec 2 \\
\iff
\left(
\frac {1+X-2 e^{i \theta} X}{1+X- e^{i \theta} X} \right) \left(
\frac{1+X-2 e^{i \theta} X}{1+X- e^{i \theta} X}
\right)^* \prec 2 \\ \iff
\left(
{1+X-2 e^{i \theta} X}\right)
\left(
{1+X-2 e^{i \theta} X} \right)^*
\prec 2
\left(
{1+X- e^{i \theta} X}\right)
\left(
{1+X- e^{i \theta} X} \right)^* \\ \iff
1+X+X^*-XX^*\succ 0 \iff
(1-X)(1-X)^*\prec 2.
\end{gather*}

Statement (2) follows from the uniqueness of a bianalytic map
carrying $0$ to $0$
with a prescribed derivative.
\end{proof}

\subsection{Example of Nonexistence of a Bianalytic Self-map on an NC LMI Domain}
\label{sec:exNo}
Recall that
a bianalytic $f$ with $f(0)=0$ is
completely determined by its differential
at a point.
Clearly, when $f^\prime(0)=1$, then $f(x)=x$.
Does a proper \cncsma exist for each $f^\prime(0)$
of modulus one?  In the previous example this was the case.
For the domain in the example in this subsection, again in one variable,
there is no proper  \cncsma  whose derivative
at the origin is $i$.

The domain will be a ``non-commutative ellipse'' described as $\cD_\cL$
with $\cL(x):= I + A x + A^* x^*$
for $A$ of the form
$$A :=
\begin{pmatrix} C_1 & C_2\\ 0 & -C_1\end{pmatrix},
$$
where $C_1, C_2\in\R$.
There is a choice of parameters
in $\cL$ such that
there is no proper \cncsma
$b$ on $\cD_\cL$ with $b(0)=0$,
and $b'(0) = i$.

Suppose
$b:\cD_\cL\to\cD_\cL$ is a proper \cncsma
with $b(0)=0$,
and $b'(0) = i$. By Theorem \ref{thm:oneone}, $b$ is bianalytic.
In particular, $b[1]:\cD_\cL(1)\to\cD_\cL(1)$ is bianalytic. By the Riemann mapping
theorem there is a conformal map $f$ of the unit disk onto $\cD_\cL(1)$
satisfying $f(0)=0$.
Then
\beq\label{eq:b1}
b[1](z)= f \big( i f^{-1}(z)\big).
\eeq
(Note that $b[1] \circ b[1] \circ b[1] \circ b[1]$ is the identity.)

To give an explicit example, we recall some
special functions involving elliptic integrals.
Let $K(z,t)$ and $K(t)$ be the normal and complete elliptic integrals
of the first kind, respectively, that is,
$$
K(z,t)= \int_0^z \frac {dx}{\sqrt{(1-x^2)(1-t^2x^2)}},
\quad
K(t)=K(1,t).
$$
Furthermore, let
$$
\mu(t)=\frac \pi2 \frac {K(\sqrt{1-t^2})}{K(t)}.
$$

Choose the axis for the non-commutative ellipse as follows:
$$
a= \cosh \left( \frac12 \mu\big(\frac 23\big)\right),
\quad
b=\sinh  \left( \frac12 \mu\big(\frac 23\big)\right).
$$
Then
$$
C_1=\frac12 \sqrt{ \frac 1{a^2}-\frac1{b^2}}, \quad
C_2=\frac1b.
$$
The desired conformal mapping is \cite{Scw,Sze}
$$
f(z)=\sin \left(
\frac{\pi}{2 K(\frac23)} K \Big( \frac z{\sqrt \frac23},\frac23\Big) \right).
$$
Hence $b[1]$ in \eqref{eq:b1}
can be explicitly computed
(for details see the Mathematica notebook {\tt Example53.nb}
available under {\it Preprints} on
\url{http://srag.fmf.uni-lj.si}). It
has a power series expansion
\beq\label{eq:b12}
\begin{split}
b[1](z) & =
i z-\frac{1}{27} i \left(9-\frac{52
   K\left(\frac{4}{9}\right)^2}{\pi ^2}\right)
   z^3+i\frac{ \left(9 \pi ^2-52
   K\left(\frac{4}{9}\right)^2\right)^2 }{486
   \pi ^4}z^5+ O(z^7) \\
& \approx i\, (1+ 0.30572  z^3 + 0.140197  z^5).
\end{split}
\eeq

This power series expansion has a radius of convergence
$\geq\eps>0$
and thus induces an analytic free mapping
 $\cN_\eps\to M(\C)$. 
By analytic continuation, this function coincides with $b$.
This enables us to evaluate $b(zN)$ for a nilpotent $N$.

Let $N$ be an order $3$ nilpotent,
$$
N=\begin{pmatrix}
0 & 1 & 0 & 0\\
0 & 0 & 1 & 0\\
0 & 0 & 0 & 1\\
0 & 0 & 0 & 0\\
\end{pmatrix}.
$$
Then $r\in\R$ satisfies $rN\in\cD_\cL$ if and only if
$-1.00033 \leq r \leq 1.00033=:r_0$.
(This has been computed symbolically in the exact arithmetic using Mathematica, and
the bounds given here are just approximations.)
 However,
$b(r_0N)\in\cD_\cL\setminus\partial\cD_\cL$
contradicting the properness.
(This was established by computing
the $8\times 8$ matrix $\cL\big(b(r_0N)\big)$
symbolically thus ensuring it is exact.
Then we apply a numerical eigenvalue solver to see that it is
positive definite with smallest eigenvalue $0.0114903\ldots$.)
We conclude that the proper \cncsma
 $b$
does not exist.

\end{document}